 \def\LaTeX{\leavevmode L\raise.42ex
   \hbox{\kern-.3em\size{\sf@size}{0pt}\selectfont A}\kern-.15em\TeX}
\newcommand{\BibTeX}{{\rm B\kern-.05em{\sc
i\kern-.025emb}\kern-.08em\TeX}}
\newtheorem{col}{Corollary}[section]
\newtheorem{thm}{Theorem}[section]
\newtheorem{lem}[thm]{Lemma}
\theoremstyle{definition}
\newtheorem{defn}{Definition}
\numberwithin{equation}{section}
\begin{document}

\title{Variational splines on Riemannian manifolds with applications to
integral geometry}

\author{Isaac Pesenson}
\address{Department of Mathematics, Temple University,
Philadelphia, PA 19122} \email{pesenson@math.temple.edu}

\keywords{ Riemannian manifold, Laplace-Beltrami operator,
variational splines, hemispherical transform, spherical Radon
transform.} \subjclass{ 42C05; Secondary 41A17, 41A65, 43A85,
46C99 }

 \begin{abstract}

 We extend the classical theory of variational interpolating splines to the case
 of  compact Riemannian manifolds.
Our consideration includes in particular such problems as
interpolation of a function by its values on a discrete set of
points and interpolation by
 values of integrals over
  a family of submanifolds.

 The existence and uniqueness of interpolating variational
 spline on a Riemannian manifold is proven.
 Optimal properties of such splines are shown.
 The explicit formulas of
  variational splines in terms of the eigen
 functions of Laplace-Beltrami operator are found.
 It is also shown that in the case of interpolation on discrete sets of points
 variational splines converge to a function in $C^{k}$ norms on manifolds.

Applications of these results to the hemispherical and Radon
transforms on the unit sphere are given.

\end{abstract}

\maketitle

 \section{Introduction and Main Results}

\bigskip

In the present paper we develop variational interpolating splines
in the context of a Riemannian compact manifold with an emphasis
on the so called average splines.

One of the basic examples of such manifolds is the unit sphere
$S^{2}$. The analysis on the two dimensional sphere $S^{2}$ found
many applications in computerized tomography, statistics, signal
analysis, seismology, weather prediction, and computer vision.
During last years the interpolation problem on $S^{2}$, the
problem of evaluating the Fourier
 coefficients of functions on the unit sphere and closely related
problems about quadrature
 formulas on $S^{2}$ attracted interest of many mathematicians.

The theory of variational splines on $R^{d}$ can be found in  [2]
and [7]. The interpolation theory on $S^{2}$ was initiated in [13]
and [16].
 An approximation theory on $S^{2}$  along with many applications and an
 extensive list of
references can be found in the recent monograph [4]. An approach
to interpolation on manifolds, which is  different from our, was
developed in [3].

Our paper was motivated by the following problem which is of
interest for integral geometry.

Let $M, dim M=d,$ be a Riemannian manifold and
 $M_{\nu}, \nu=1,2,...,N,$
 is a family of submanifolds
 $dim M_{\nu}=d_{\nu}, 0\leq d_{\nu}\leq d.$ Given a
set of numbers $v_{1},v_{2},...,v_{N}$ we would like to find a
function for which
\begin{equation}
\int_{M_{\nu}}fdx=v_{\nu}, \nu=1,2,...,N.
\end{equation}

Moreover, we are interested in a "least curved" function that
satisfies the previous conditions. In other words, we seek a
function that satisfies (1.1) and minimizes the functional
$$
u\rightarrow \|(1+\Delta)^{t/2}u\| $$ for appropriate real $t$
where $\Delta$ is the Laplace-Beltrami operator on $M.$ Note that
in the case when the submanifold $M_{\nu}$ is a point the integral
(1.1) is understood as a value of a function at this point.

Our result is that if $s$ is a solution of such variational
problem then the distribution $(1+\Delta)^{t}s$ should satisfy the
following distributional pseudo-differential equation on $M$ for
any $\psi\in C_{0}^{\infty}(M)$,
\begin{equation}
\int_{M}\psi  (1+\Delta)^{t}s
dx=\sum_{\nu=1}^{N}\alpha_{\nu}\int_{M{\nu}}\psi dx,
\end{equation}
where coefficients $\alpha_{\nu}\in \mathbb{C}$ depend just on
$s$.

This equation allows one to obtain the Fourier coefficients of the
function $s$ with respect to eigen functions of the
Laplace-Beltrami operator $\Delta$.

From the very definition our solution $s$ is an "interpolant" in
the sense that it has a prescribed set of integrals. Moreover, we
show that the function $s$ is not just an interpolant but also an
optimal approximation to the set of all functions $f$ in the
Sobolev space $H_{t}(M)$ that satisfy (1.1) and
\begin{equation}
\|(1+\Delta)^{t/2}f\|\leq K,
\end{equation}
for appropriate $K >0.$ Namely, we show that $s$ is the center of
the convex and bounded set of all functions that satisfy (1.1) and
(1.3).

In the Section 4 we develop an approximation theory by
interpolating splines in the case when all submanifolds $M_{\nu}$
are points. We consider convergence of such interpolants in two
different cases. In a first case a set of points
$M_{\nu}=x_{\nu},\nu=1,2,...,N,$ gets denser and in the second one
the order of smoothness of splines goes to infinity. For both
types of convergence we give the rates of convergence of such
interpolants in Sobolev and uniform norms on manifolds.

Our approximation theorem in the case when the order of smoothness
of interpolants goes to infinity but the set of points
$\{x_{\nu}\}$ is fixed, leads to a natural generalization of the
classical Sampling Theorem in the sense that it allows a complete
recovery of band limited functions, i.e. finite linear
combinations of eigen functions of the Laplace-Beltrami operator.

Note that  some generalizations of a classical Sampling Theorem in
the case of periodic functions and periodic splines on the unit
circle can be found in [5], [12] and [14]. In our case the circle
is replaced by a general compact
 Riemannian manifold and trigonometric functions by eigen
 functions of the corresponding Laplace-Beltrami operator.
The paper [1]  contains a Sampling Theorem in the case of $S^{2}$
which is different from our. A result about uniform convergence of
spherical splines to smooth functions
 on spheres was obtained
in [4] with a different rate of convergence and for a different
definition of splines on $S^{2}$ which requires a so called
admissible set of knots.

 In the last section, 5
 we apply our results to the hemispherical and to the Radon
 transforms on the unit sphere $S^{d}$ in $R^{d+1}$.
 Namely, we treat
 the inversion of the hemispherical transform as a specific interpolation
 problem, where information about a function is a set of values of
 integrals over hemispheres from a finite collection of
 hemispheres. A smooth odd function $f$ on $S^{d}$ will be obtained as a uniform
  limit of a sequence of "interpolants" of $f$ which
 have the same integrals over a family of hemispheres as the $f$
 does.

 The same approach we apply to the spherical Radon transform: we
 obtain a smooth function $f$ on $S^{d}$ as the limit of a sequence of
  "interpolants"  of $f$ whose integrals over subspheres from a family
   of subspheres coincide with the integrals of $f$. In both cases our
    interpolants provide an optimal approximations to unknown functions.

Let us give a more detailed account of main results. We consider a
compact Riemannian manifold $M$ and the corresponding
Laplace-Beltrami operator $\Delta$. It is known that the operator
$\Delta$ is elliptic, positive definite and selfadjoint in the
space $L_{2}(M)$ constructed using a Riemannian density $dx$. If $
M$ has a non-empty boundary we assume the Dirichlet boundary
conditions.

The Sobolev space $H_{t}(M), t\in \mathbb{R}$ can be introduced as the
domain of the operator $(1+\Delta)^{t/2}$ with the graph norm

$$
\|f\|_{t}=\|(1+\Delta)^{t/2}f\|, f\in H_{t}(M).
$$

It is known that in this case the operator $\Delta$ has a discrete
spectrum $0=\lambda_{0}<\lambda_{1}\leq \lambda_{2}\leq...,$ and
one can choose corresponding eigen functions $\varphi_{0},
\varphi_{1},...$ which form an orthonormal basis of $L_{2}(M).$ A
distribution $f$ belongs to $H_{t}(M), t\in \mathbb{R},$ if and
only if
$$
\|f\|_{t}=
\left(\sum_{j=0}^{\infty}(1+\lambda_{j})^{t}|c_{j}(f)|^{2}\right)^{1/2}<\infty,
$$
where Fourier coefficients $c_{j}(f)$ of $f$ are given by
$$
c_{j}(f)=<f,\varphi_{j}>=\int_{M}f\overline{\varphi_{j}}dx.
$$

\bigskip

This $L_{2}-$inner product can be also considered as a pairing
between $H_{-t}(M)$ and $H_{t}(M)$ and in this sense every element
of $H_{-t}(M)$ can be identified with a continuous functional on
$H_{t}(M)$.

 Let $F_{\nu}, \nu=1,2,...,N,$ be a set of
distributions from a certain $H_{-t_{0}}(M), t_{0}\geq 0.$ Our
main assumption about the family of functionals $\{F_{\nu}\}$ is
that the functionals $F_{\nu}$ can be "separated" in the following
sense:

\bigskip

 \textbf{Independence Assumption.} \textsl{There are functions
  $\vartheta_{\nu}\in C^{\infty}_{0}(M)$ such
that}

$$
F_{\nu}(\vartheta_{\mu})=\delta_{\nu\mu},
$$
\textsl{where $\delta_{\nu\mu}$ is the Kronecker delta.}

Note, that this assumption implies in particular that the
functionals $F_{\nu}$ are linearly independent. Indeed, if we have
that for certain coefficients $\gamma_{1},\gamma_{2}, ...,
\gamma_{N}$
$$
\sum _{\nu=1}^{N}\gamma_{\nu}F_{\nu}=0,
$$
then for any $1\leq\mu\leq N$
$$
0=\sum_{\nu=1}^{N}\gamma_{\nu}F_{\nu}(\vartheta_{\mu})=\gamma_{\mu}.
$$

The families of distributions that satisfy our condition
  include

  a) Finite families of $\delta$ functionals and their
  derivatives.

  b) Sets of integrals over submanifolds from a finite family of
submanifolds of any codimension.

\textbf{Variational Problem}

 Given a sequence of complex numbers
$v=\{v_{\nu}\}, \nu=1,2,...,N,$ and a $t>t_{0}$ we consider the
following variational problem:

\bigskip

\textsl{Find a function  $u$ from the space $H_{t}(M)$ which has
the following properties:}

1) $ F_{\nu}(u)=v_{\nu}, \nu=1,2,...,N, v=\{v_{\nu}\},$

2) $u$ \textsl{minimizes functional $u\rightarrow \|(1+\Delta)
^{t/2}u\|$.}

\bigskip

We show that the solution to Variational problem does exist and is
unique for any $t>t_{0}$ even without any assumption. But we need
the Independence  Assumption in order to determine the Fourier
coefficients of the solution. The solution to the Variational
Problem will be called a spline and will be denoted as $s_{t}(v).$
 The set of all solutions for a fixed set of distributions
$F=\{F_{\nu}\}$ and a fixed $t$ will be denoted as $S(F,t).$

Given a function $f\in H_{t}(M)$ we will say that the unique
spline $s$ from $S(F,t)$ interpolates $f$ on $F$ if
$$
F_{\nu}(f)=F_{\nu}(s).
$$
Such spline will be denoted as $s_{t}(f).$

From the point of view of the classical theory of variational
 splines it would be more natural to consider minimization of the
functional
$$
u\rightarrow \|\Delta^{t/2}u\|.
$$
However, in the case of a general compact manifolds it is easer to
work with the operator $1+\Delta$ since this operator is
invertible.

Our main result concerning  variational splines is the following.
\begin{thm}
If every functional $F_{\nu}, \nu=1,2,...,N,$ belongs to
$H_{-t_{0}}(M)$, if the Independence and Reality Assumptions are
satisfied and if $t>t_{0}+d/2$, then for any given sequence
$v=\{v_{\nu} \}, \nu=1,2,...N,$ the following statements are
equivalent:

1) $s_{t}(v)$ is the solution to \textsl{the Variational Problem};

2) $s_{t}(v)$ satisfies the following equation in the sense of
distributions
\begin{equation}
(1+\Delta)^{t}s_{t}(v)=\sum_{\nu=1}^{N}\alpha_{\nu}(s_{t}(v))\overline{F_{\nu}},
t>t_{0}+d/2,
\end{equation}
where $\alpha_{1}(s_{t}(v)),...,\alpha_{N}(s_{t}(v))$ form a
solution of the $N\times N$ system
\begin{equation}
\sum_{\nu=1}^{N}\beta_{\nu\mu}\alpha_{\nu}(s_{t}(v))=v_{\mu},
\mu=1,...,N,
\end{equation}
and
\begin{equation}
\beta_{\nu\mu}=\sum_{j=0}^{\infty}(1+\lambda_{j})^{-t}\overline{F_{\nu}(\varphi_{j})}
F_{\mu}(\varphi_{j});
\end{equation}

3) the Fourier series of $s_{t}(v)$ is the following
$$
s_{t}(v)=\sum_{j=0}^{\infty}c_{j}(s_{t}(v))\varphi_{j},
$$
where
$$
c_{j}(s_{t}(v))=<s_{t}(v),\varphi_{j}>=(1+\lambda_{j})^{-t}
\sum_{\nu=1}^{N}\alpha_{\nu}(s_{t}(v))\overline{F_{\nu}(\varphi_{j})}.
$$
\end{thm}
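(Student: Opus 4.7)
The strategy is to treat the Variational Problem as a Hilbert-space nearest-point problem in $H_{t}(M)$, read off its Euler--Lagrange condition as vanishing of an $H_{t}$-inner product on a closed subspace of finite codimension, and then translate that orthogonality into the claimed distributional equation and Fourier expansion.

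\emph{Step 1 (Existence and uniqueness).} The Independence Assumption furnishes a distinguished element $u_{0}=\sum_{\nu}v_{\nu}\vartheta_{\nu}\in C^{\infty}_{0}(M)\subset H_{t}(M)$ with $F_{\mu}(u_{0})=v_{\mu}$, so the affine set $V=\{u\in H_{t}(M):F_{\nu}(u)=v_{\nu},\ \nu=1,\dots,N\}$ is nonempty. Because $t>t_{0}$, each $F_{\nu}\in H_{-t_{0}}\subset H_{-t}$ is continuous on $H_{t}(M)$, so $V$ is closed and convex. The norm $\|u\|_{t}=\|(1+\Delta)^{t/2}u\|$ is a Hilbert norm on $H_{t}(M)$, so projection onto $V$ gives a unique minimizer $s=s_{t}(v)\in V$, which is characterized by the orthogonality relation
\begin{equation*}
\langle s,\varphi\rangle_{t}=\langle(1+\Delta)^{t/2}s,(1+\Delta)^{t/2}\varphi\rangle=0\qquad\forall\,\varphi\in V_{0}:=\bigcap_{\nu=1}^{N}\ker F_{\nu}.
\end{equation*}

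\emph{Step 2 ((1)$\Leftrightarrow$(2)).} The functional $\Phi(\varphi)=\langle s,\varphi\rangle_{t}$ is continuous on $H_{t}(M)$, and by Step 1 it vanishes on the finite-codimension subspace $V_{0}$. Since the $F_{\nu}$ are linearly independent (this is the immediate corollary of the Independence Assumption noted in the text), a standard linear algebra argument in the dual of $H_{t}(M)$ shows $\Phi=\sum_{\nu}\alpha_{\nu}F_{\nu}$ for uniquely determined $\alpha_{\nu}\in\mathbb{C}$. Rewriting $\Phi(\varphi)=\int_{M}(1+\Delta)^{t}s\cdot\bar\varphi\,dx$ and transferring conjugates through $\varphi\mapsto\bar\varphi$ (this is where the Reality Assumption enters, ensuring $F_{\nu}(\bar\varphi)=\overline{F_{\nu}(\varphi)}$) converts the identity into precisely the distributional equation (1.4). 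Conversely, if $s$ satisfies (1.4) and $F_{\mu}(s)=v_{\mu}$, the same computation run in reverse shows $s\perp V_{0}$ in $\|\cdot\|_{t}$, and any element of $V$ decomposes as $s+(u-s)$ with $u-s\in V_{0}$, giving $\|u\|_{t}^{2}=\|s\|_{t}^{2}+\|u-s\|_{t}^{2}\ge\|s\|_{t}^{2}$, so $s$ is the minimizer.

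\emph{Step 3 ((2)$\Leftrightarrow$(3)) and determining the $\alpha_{\nu}$.} Write $s=\sum_{j}c_{j}(s)\varphi_{j}$ in $H_{t}(M)$, so that $(1+\Delta)^{t}s=\sum_{j}(1+\lambda_{j})^{t}c_{j}(s)\varphi_{j}$ as an element of $H_{-t}(M)$. Pair (1.4) against $\varphi_{j}$ in the extended $L_{2}$-pairing; orthonormality of the eigenbasis yields
\begin{equation*}
(1+\lambda_{j})^{t}c_{j}(s)=\sum_{\nu=1}^{N}\alpha_{\nu}\,\overline{F_{\nu}(\varphi_{j})},
\end{equation*}
which is (3). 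Conversely, summing the displayed identity against $\varphi_{j}$ recovers (1.4). To identify the $\alpha_{\nu}$ as the solution of (1.5), apply $F_{\mu}$ termwise to the Fourier series of $s$:
\begin{equation*}
v_{\mu}=F_{\mu}(s)=\sum_{j}c_{j}(s)F_{\mu}(\varphi_{j})=\sum_{\nu=1}^{N}\alpha_{\nu}\sum_{j}(1+\lambda_{j})^{-t}\overline{F_{\nu}(\varphi_{j})}F_{\mu}(\varphi_{j})=\sum_{\nu=1}^{N}\beta_{\nu\mu}\alpha_{\nu}.
\end{equation*}

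\emph{Main obstacle.} The delicate point is justifying the termwise actions and the absolute convergence of the series defining $\beta_{\nu\mu}$. Since $F_{\nu}\in H_{-t_{0}}(M)$, the bound $|F_{\nu}(\varphi_{j})|\le\|F_{\nu}\|_{-t_{0}}(1+\lambda_{j})^{t_{0}/2}$ reduces convergence of $\beta_{\nu\mu}$ to the summability of $\sum_{j}(1+\lambda_{j})^{t_{0}-t}$. By Weyl's asymptotics $\lambda_{j}\asymp j^{2/d}$ on a compact $d$-manifold, this sum converges exactly when $t-t_{0}>d/2$, which is precisely the hypothesis. The same estimate legitimizes the interchange of summation and application of $F_{\mu}$ in the last display, and guarantees that $\sum_{\nu}\alpha_{\nu}\overline{F_{\nu}}$ lies in $H_{-t}(M)$ so that (1.4) makes sense as an equality in $H_{-t}(M)$. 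Uniqueness of the solution of (1.5) follows because the Gram-type matrix $(\beta_{\nu\mu})$ is the matrix of the positive definite quadratic form $\alpha\mapsto\|\sum_{\nu}\alpha_{\nu}\overline{F_{\nu}}\|_{-t}^{2}$ on $\mathbb{C}^{N}$, which is nondegenerate by the linear independence of the $F_{\nu}$.
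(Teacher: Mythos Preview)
Your proposal is correct and follows essentially the same route as the paper: Hilbert-space projection for existence/uniqueness, orthogonality to $V_{0}$ translated into the distributional equation, and termwise Fourier/functional computations for the coefficients and the $\beta_{\nu\mu}$-system. Two minor differences worth noting: the paper carries out your ``standard linear algebra argument'' explicitly by plugging $\psi-\sum_{\nu}F_{\nu}(\psi)\vartheta_{\nu}\in V_{0}$ into the orthogonality relation (which makes the conjugation in $\overline{F_{\nu}}$ fall out automatically rather than via a separate Reality step), and for the implication $(2)\Rightarrow(1)$ the paper invokes elliptic regularity for $(1+\Delta)^{t}$ to conclude that a distributional solution of (1.4) actually lies in $H_{t}(M)$, a point you pass over.
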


The statement that (1.4) is satisfied in the sense of
distributions means, that for any  $\psi\in H_{t}(M), t\geq
t_{0},$

$$\int_{M}(1+\Delta)^{t}s_{t}(v)\overline{\psi} dx=\sum_{\nu=1}^{N}
\alpha_{\nu}(s_{t}(v))\overline{F_{\nu}(\psi )}.$$

It is important to note that the system (1.5) is always solvable
according to our uniqueness and existence result for the
\textsl{Variational Problem.}

It is also necessary to note that the series (1.6) is absolutely
convergent if $t>t_{0}+d/2$. Indeed, since functionals $F_{\nu}$
are continuous in the Sobolev space $H_{t_{0}}(M)$ we obtain that
for any normalized eigen function $\varphi_{j}$ which corresponds
to the eigen value $\lambda_{j}$ the following inequality holds
true

$$|F_{\nu}(\varphi_{j})|\leq
C(M,F)\|(1+\Delta)^{t_{0}/2}\varphi_{j}\|\leq
C(M,F)(1+\lambda_{j})^{t_{0}/2}.
$$
So
$$
|\overline{F_{\nu}(\varphi_{j})}F_{\mu}(\varphi_{j})| \leq
C(M,F)(1+\lambda_{j})^{t_{0}},
$$
and
$$
|(1+\lambda_{j})^{-t}\overline{F_{\nu}(\varphi_{j})}F_{\mu}(\varphi_{j})|\leq
C(M,F)(1+\lambda_{j})^{(t_{0}-t)}.
$$

It is known that the series
$$
\sum_{j}\lambda_{j}^{-\tau},
$$
which defines the $\zeta-$function of the Laplace-Beltrami
operator, converges if $\tau>d/2$. This implies absolute
convergence of (1.6) in the case $t>t_{0}+d/2$.

We show that for a given function $f\in H_{t}(M)$ its
interpolating spline $s_{t}(f)$ has the following important
property which means that it is always an optimal approximation in
the sense of Golomb and Weinberger [6]. Namely, if  $Q(F,f,t,K)$
is the convex bounded and closed set of all functions $g$ from
$H_{t}(M)$ such that

\bigskip

1) $F_{\nu}(g)=F_{\nu}(f), \nu=1,2,...,N,$

and

2) $\|g\|_{t}\leq K,$ for a real $ K\geq \|s_{t}(f)\|_{t},$

\bigskip
then $s_{t}(f)$ is the center of $Q(F,f,t,K)$. This means that for
any $g\in Q(F,f,t,K)$
\begin{equation}
  \|s_{t}(f)-g\|_{t}\leq \frac{1}{2} diam Q(F,f,t,K).
\end{equation}

To formulate our Approximation Theorem in the case when the
distributions $F_{\nu}$ are Dirac distributions $\delta_{x_{\nu}}$
at points $x_{\nu}$ we need the notion of a $\rho$-lattice.

We will say that a finite set of points $X_{\rho}=\{x_{\nu}\},
\nu=1,2,...,N,$ is a $\rho$-lattice, if

1) the balls $B(x_{\nu},\rho/2)$ are disjoint,

2) the balls $B(x_{\nu},\rho)$  form a cover of $M$.

The set of functionals $F=\{F_{\nu}\}$ associated with a
$\rho$-lattice $X_{\rho}$ is the set of Dirac distributions
$\delta_{\nu}=\delta_{x_{\nu}}, \nu=1,2,...,N$ on
$C_{0}^{\infty}(M)$.

We have the following result about convergence of splines in
uniform spaces $C^{k}(M)$.

\begin{thm}
There exist a constant $\rho(M)>0,$ so that for any $t>d/2+k,
d=dim M,$
 there exists
a constant $C(M,t)$  such that for any $\rho$-lattice $X_{\rho}$
with $\rho<\rho(M),$ and any smooth function $f$ the following
holds true
\begin{equation}
\|s_{2^{m}d+t}(f)(x)-f(x)\|_{C^{k}(M)}\leq
\left(C(M,t)\rho^{2}\right)^{2^{m}d} \|(1+\Delta)^{2^{m}d+t}f\|
\end{equation}
for all $m=0,1,... .$

Moreover, if $f$ is $\omega$-band limited, i.e. belongs to the
$span$ of eigen functions whose eigen values are not greater than
$\omega$, then
\begin{equation}
\|s_{2^{m}d+t}(f)(x)-f(x)\|_{C^{k}(M)}\leq (1+\omega)^{t}
\left(C(M,t)\rho^{2}(1+\omega)\right)^{2^{m}d}\|f\|.
\end{equation}
\end{thm}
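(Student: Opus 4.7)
The plan is to bound the $C^k(M)$ error by the corresponding Sobolev norm and then exploit two interlocking facts about $g := s_{\tilde t}(f)-f$ with $\tilde t := 2^{m}d+t$: the function $g$ vanishes on the $\rho$-lattice $X_\rho$, and by the variational characterisation of $s_{\tilde t}(f)$ the spline itself is $H_{\tilde t}$-orthogonal to every element of $H_{\tilde t}(M)$ that vanishes on $X_\rho$, and in particular to $g$.

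Since $t > d/2+k$, the Sobolev embedding $H_t(M)\hookrightarrow C^k(M)$ yields $\|g\|_{C^k(M)} \leq C_1(M,t)\|g\|_t$, reducing the problem to a bound on $\|g\|_t$. The main technical input is a Plancherel-Polya-type inequality: for every $s>d/2$ there exist constants $\rho(M,s)>0$ and $C_2(M,s)>0$ such that for every $\rho$-lattice $X_\rho$ with $\rho<\rho(M,s)$ and every $u\in H_s(M)$ vanishing on $X_\rho$,
\begin{equation*}
\|u\|_0 \leq C_2(M,s)\,\rho^{s}\,\|u\|_s.
\end{equation*}
I would prove this by splitting $u=u_{\leq\omega}+u^{\omega}$ at the cutoff $\omega\asymp\rho^{-2}$. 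In that regime the sampling inequality for $\omega$-bandlimited functions bounds $\|u_{\leq\omega}\|_0^2$ by $\rho^d\sum_\nu|u_{\leq\omega}(x_\nu)|^2$; the vanishing of $u$ on $X_\rho$ converts this into $\rho^d\sum_\nu|u^\omega(x_\nu)|^2$, and a local Sobolev estimate on each geodesic ball $B(x_\nu,\rho)$ bounds the latter by $\|u^\omega\|_0^2+\rho^{2s}\|u^\omega\|_s^2$. The high-frequency decay $\|u^\omega\|_{s'}\leq(1+\omega)^{-(s-s')/2}\|u\|_s$ then collapses both contributions, together with the direct estimate of $\|u^\omega\|_0$, into $C\rho^{2s}\|u\|_s^2$.

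For the spline, the variational characterisation from Theorem 1.1 shows that $(1+\Delta)^{\tilde t}s_{\tilde t}(f)$ is supported on $X_\rho$, whence $\langle(1+\Delta)^{\tilde t}s_{\tilde t}(f),v\rangle=0$ for every $v\in H_{\tilde t}(M)$ that vanishes on $X_\rho$. Choosing $v=g$ and expanding $s_{\tilde t}(f)=f+g$ produces
\begin{equation*}
\|g\|_{\tilde t}^{\,2} = -\langle(1+\Delta)^{\tilde t}f,\,g\rangle \leq \|(1+\Delta)^{\tilde t}f\|\,\|g\|_0.
\end{equation*}
Applying the Plancherel-Polya lemma at level $s=\tilde t$ to the right-hand side gives $\|g\|_{\tilde t}\leq C_2\rho^{\tilde t}\|(1+\Delta)^{\tilde t}f\|$, and feeding this once more through the lemma yields $\|g\|_0\leq C_2^{2}\rho^{2\tilde t}\|(1+\Delta)^{\tilde t}f\|$.

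Finally, the log-convexity of $\sigma\mapsto\log\|u\|_\sigma$ on the Hilbert scale of $1+\Delta$ (an immediate consequence of Cauchy-Schwarz applied on the spectral side) provides the interpolation inequality $\|g\|_t \leq \|g\|_0^{1-t/\tilde t}\|g\|_{\tilde t}^{t/\tilde t}$. Substituting the two displayed bounds and simplifying exponents gives
\begin{equation*}
\|g\|_t \leq C_2^{2-t/\tilde t}\,\rho^{2\tilde t - t}\,\|(1+\Delta)^{\tilde t}f\| = C_2^{2-t/\tilde t}\,\rho^{t}\,(\rho^{2})^{2^{m}d}\,\|(1+\Delta)^{\tilde t}f\|.
\end{equation*}
The prefactor $C_2^{2-t/\tilde t}\rho^{t}$ is bounded uniformly in $m\geq 0$ for $\rho<\rho(M,\tilde t)$, so after absorbing it into a single constant $C(M,t)$ one obtains (1.8); the bandlimited bound (1.9) is immediate from $\|(1+\Delta)^{\tilde t}f\|\leq(1+\omega)^{\tilde t}\|f\|$ when $f$ is $\omega$-bandlimited. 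The single hard step is the Plancherel-Polya inequality above: it is the one place where the Riemannian geometry of $M$ intervenes, through the sampling threshold $\rho\omega^{1/2}\leq c(M)$ for bandlimited functions on the manifold.
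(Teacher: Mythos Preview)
Your approach differs substantially from the paper's, and as written it has a real gap concerning the $m$-dependence of the constants.

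The paper does not prove a sampling inequality at every order. Instead it establishes a single, fixed-order Poincar\'e-type estimate (Lemma~4.1): for $g\in H_{2d}(M)$ vanishing on $X_\rho$ one has $\|g\|\le C(M)\rho^{2d}\|(1+\Delta)^{d}g\|$, with $C(M)$ and the threshold $\rho(M)$ depending only on $M$. This one inequality is then bootstrapped by a purely spectral iteration (Lemma~4.3): if $\|f\|\le a\|\Delta^{s}f\|$, then $\|\Delta^{t}f\|\le a^{2^{l}}\|\Delta^{2^{l}s+t}f\|$ for every $l\ge 0$, proved by splitting the Fourier side at $\lambda_j=a^{-1/s}$ and introducing no new geometric constants. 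Applying this with $a=C(M)\rho^{2d}$ and $s=d$ produces the factor $(C(M)\rho^{2d})^{2^{m}}$ uniformly in $m$; the minimization property of $s_{\tilde t}(f)$ then lets one replace $g$ by $f$ on the right, and Sobolev embedding finishes.

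Your route instead invokes a Plancherel--P\'olya inequality $\|u\|_0\le C_2(M,s)\rho^{s}\|u\|_s$ directly at level $s=\tilde t=2^{m}d+t$, combined with the duality step $\|g\|_{\tilde t}^{2}\le\|(1+\Delta)^{\tilde t}f\|\,\|g\|_0$ and interpolation. The problem is that both $C_2(M,s)$ and your threshold $\rho(M,s)$ depend on $s=\tilde t$, hence on $m$. Your own sketch uses a local Sobolev estimate on each geodesic ball at level $s$, and such estimates carry constants that grow with $s$; so the final claim that ``$C_2^{2-t/\tilde t}\rho^{t}$ is bounded uniformly in $m\ge 0$ for $\rho<\rho(M,\tilde t)$'' is unjustified, and worse, the condition $\rho<\rho(M,\tilde t)$ is itself $m$-dependent, whereas the theorem asserts a single $\rho(M)$ valid for all $m$. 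The defect is reparable: run the local Sobolev step at a \emph{fixed} exponent $s_0>d/2$, and use the high-frequency decay $\|u^{\omega}\|_{s_0}\le(1+\omega)^{-(s-s_0)/2}\|u\|_s$ (with $\omega\asymp\rho^{-2}$) to recover the full power $\rho^{s}$ with a constant independent of $s$. But this repair is exactly the content of the paper's spectral iteration in disguise, and you must make it explicit for the argument to close.
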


The inequality (1.8) shows that convergence in $C^{k}(M)$ takes
place when $\rho$ goes to zero and the index $2^{m}d+t$ is fixed.

The second inequality (1.9) shows that right-hand side goes to
zero for a fixed $\rho$- lattice $X_{\rho}$ as long as
$$
\rho<\left(C(M)(1+\omega)\right)^{-1/2}
$$
and $m$ goes to infinity.

The second statement is, in fact, a generalization of the
classical Sampling Theorem to the case of a general compact
manifold.

In the last Section, 5  we use above results for an approximate
inversion of the hemispherical and Radon transforms on spheres. In
these cases the functionals $F_{\nu}$ are integrals over
hemispheres and subspheres respectively. In these situations by
applying our Approximation Theorem on the "dual" sphere we show
that if the set of hemispheres (resp. subspheres) gets "denser"
than interpolants converge to the original function in
$C^{k}(S^{d})$ norms.

Note that some of these results in the case of the spherical Radon
transform were obtained in [11].

\section{Variational splines on manifolds}

In this section we prove the existence and  uniqueness of the
interpolating variational spline on a compact Riemannian manifold.
We also describe  Fourier coefficients of such splines with
respect to an orthonormal system of eigen functions of the
Laplace-Beltrami operator.

 We assume that $F=\{F_{\nu}\}, \nu=1,...,N,$ is a family
of distributions from a $H_{-t_{0}}(M), t_{0}\geq 0.$

 Given a sequence of complex numbers
$\{v_{\nu}\}, \nu=1,2,...,N,$ and a $t>t_{0}$ we show  that the
corresponding \textsl{Variational Problem} does have a unique
solution.

\begin{thm}
If every functional $F_{\nu}, \nu=1,2,...,N$ belongs to
$H_{-t_{0}}(M), t_{0}\geq 0$, if $t>t_{0}$, then the \textsl{
Variational Problem} does have the unique solution for any
sequence of values $(v_{1},v_{2},... v_{N})$.
\end{thm}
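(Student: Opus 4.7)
The plan is to realize the \emph{Variational Problem} as the minimization of a continuous strictly convex quadratic form on a closed affine subspace of a Hilbert space, so that the standard projection theorem delivers both existence and uniqueness at once. I begin by equipping $H_{t}(M)$ with its natural inner product $\langle u,v\rangle_{t}=\langle(1+\Delta)^{t/2}u,(1+\Delta)^{t/2}v\rangle_{L_{2}(M)}$, whose induced norm is precisely $\|\cdot\|_{t}$. Since $t>t_{0}\geq 0$, the embedding $H_{t}(M)\hookrightarrow H_{t_{0}}(M)$ is continuous, so each $F_{\nu}\in H_{-t_{0}}(M)$ restricts to a bounded linear functional on $H_{t}(M)$. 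The map $\Phi\colon H_{t}(M)\to\mathbb{C}^{N}$ given by $\Phi(u)=(F_{1}(u),\dots,F_{N}(u))$ is therefore continuous and linear, and the constraint set $V=\Phi^{-1}(v)$ is a closed affine subspace of $H_{t}(M)$.

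To exhibit a concrete member of $V$, I would apply the Riesz representation theorem in the Hilbert space $H_{t}(M)$ to obtain representatives $R_{\nu}\in H_{t}(M)$ with $F_{\nu}(u)=\langle u,R_{\nu}\rangle_{t}$ for all $u\in H_{t}(M)$. Searching for $u_{0}$ in $\mathrm{span}\{R_{1},\dots,R_{N}\}$, say $u_{0}=\sum_{\mu}\alpha_{\mu}R_{\mu}$, the constraints $F_{\nu}(u_{0})=v_{\nu}$ collapse to the Hermitian $N\times N$ system $\sum_{\mu}G_{\nu\mu}\alpha_{\mu}=v_{\nu}$ with Gram matrix $G_{\nu\mu}=\langle R_{\mu},R_{\nu}\rangle_{t}=F_{\nu}(R_{\mu})$. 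This system is uniquely solvable precisely when the $R_{\nu}$---equivalently the $F_{\nu}$ restricted to $H_{t}(M)$---are linearly independent; this is the sole place where any nondegeneracy of the family $\{F_{\nu}\}$ enters, and it is automatic under the Independence Assumption via the test functions $\vartheta_{\nu}$. The resulting $u_{0}$ lies in $\mathrm{span}\{R_{1},\dots,R_{N}\}=V_{0}^{\perp}$, where $V_{0}=\bigcap_{\nu}\ker F_{\nu}$.

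Minimization and uniqueness then follow by pure Hilbert space reasoning. Writing an arbitrary $u\in V$ as $u=u_{0}+w$ with $w\in V_{0}$ and using $u_{0}\perp V_{0}$, the Pythagorean identity gives $\|u\|_{t}^{2}=\|u_{0}\|_{t}^{2}+\|w\|_{t}^{2}\geq\|u_{0}\|_{t}^{2}$, with equality if and only if $w=0$. Hence $u_{0}$ is the unique element of $V$ of minimum norm, which is the desired spline $s_{t}(v)$. Uniqueness can equally well be read off from strict convexity: if $s_{1},s_{2}\in V$ both achieved the minimum, the parallelogram identity applied to $(s_{1}+s_{2})/2\in V$ would force $s_{1}=s_{2}$.

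The only obstacle worth flagging is the non-emptiness of $V$ for arbitrary data $v\in\mathbb{C}^{N}$: this requires linear independence of $\{F_{\nu}\}$ as functionals on $H_{t}(M)$, which the theorem implicitly invokes by asserting solvability ``for any sequence of values''. Once $V\neq\emptyset$ has been secured, the argument is entirely abstract and uses no finer property of $M$ or of the Laplace--Beltrami operator than the Hilbert structure of $H_{t}(M)$ itself; the explicit Fourier expansion of the $R_{\nu}$ and hence of $s_{t}(v)$, which will be needed for Theorem~1.1, can be postponed to the moment when one brings in the eigenbasis of $\Delta$.
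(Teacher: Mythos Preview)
Your argument is correct and follows essentially the same route as the paper: realize the constraint set as a closed affine subspace of the Hilbert space $H_{t}(M)$, and obtain the minimizer as the unique element of this set lying in the orthogonal complement of the homogeneous constraint space $V^{0}_{t}(F)$, with Pythagoras furnishing both minimality and uniqueness. The only cosmetic difference is that the paper starts from an \emph{arbitrary} interpolant $g$ and subtracts its orthogonal projection onto $V^{0}_{t}(F)$, whereas you build the minimizer directly in $V_{0}^{\perp}=\mathrm{span}\{R_{\nu}\}$ by solving the Gram system; the two constructions of course yield the same element. Your explicit flagging of the non-emptiness issue is well taken: the paper silently assumes that some $g$ with $F_{\nu}(g)=v_{\nu}$ exists for arbitrary $v$, which indeed requires linear independence of the $F_{\nu}$ on $H_{t}(M)$ and is not literally guaranteed by the hypotheses of Theorem~2.1 as stated (though it follows from the Independence Assumption used elsewhere).
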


\begin{proof}
Consider the set $V^{0}_{t}(F)\subset H_{t}(M), t>t_{0}, $ of all
functions from $H_{t}(M)$ such that for every $1\leq \nu\leq N,
F_{\nu}(f)=0.$

Given a sequence of complex numbers $(v_{1}, v_{2}, ..., v_{N})$ the
 linear manifold

 $$V_{t}(F,v_{1},...v_{N}), t>t_{0}$$
 of all functions $f$ from
$H_{t}(M)$ such that $F_{\nu}(f)=v_{\nu}, \nu=1,...,N,$ is a
shift of the closed subspace $V^{0}_{t}(F)$, i.e.

$$ V_{t}(F,v_{1},...,v_{N})=V^{0}_{t}(F)+g,$$
where $g$ is any function from $H_{t}(M)$ such that
$F_{\nu}(g)=v_{\nu}, \nu=1,2,...,N.$

Consider the orthogonal projection $g_{0}$ of $g\in H_{t}(M)$
onto the space $V^{0}_{t}(F)$ with respect to the inner product
in $H_{t}(M)$:

$$<f_{1},
f_{2}>_{H_{t}(M)}=<(1+\Delta)^{t/2}f_{1},(1+\Delta)^{t/2}f_{2}>_{L_{2}(M)}=
$$
$$
 \int_{M}(1+\Delta)^{t/2}f_{1}\overline{(1+\Delta)^{t/2}f_{2}}dx.
$$

It is clear that $s_{t}(v)=g-g_{0}\in V_{t}(F,v_{1},...,v_{N})$ is
the unique solution of the Variational Problem. Indeed, to show
that $s_{t}(v)$ minimizes the functional

$$u\rightarrow \|(1+\Delta)^{t/2}u\|$$
on the set $V_{t}(F,v_{1},...,v_{N})$ we note that any function
from $V_{t}(F,v_{1},...,v_{N})$ can be written in the form
$s_{t}(v)+h,$ where $h\in V^{0}_{t}(F)$. For such a function we
have

$$
\|(1+\Delta)^{t/2}(s_{t}(v)+h)\|^{2}=
$$
$$
\|(1+\Delta)^{t/2}s_{t}(v)\|^{2}+2<s_{t}(v),h>_{H_{t}(M)}+
\|(1+\Delta)^{t/2}h\|^{2}.
$$
Since $s_{t}(v)=g-g_{0}$ is orthogonal to $V^{0}_{t}(F)$ we obtain

$$
\|(1+\Delta)^{t/2}(s_{t}(v)+\sigma h)\|^{2}=
\|(1+\Delta)^{t/2}s_{t}(v)\|^{2}+
|\sigma|^{2}\|(1+\Delta)^{t/2}h\|^{2}, h\in V^{0}_{t}(F),
$$
that shows that the function $s_{t}(v)$ is the minimizer.
\end{proof}

\begin{col}
A function $u\in H_{t}(M)$ is a solution of the Variational
Problem if and only if it is orthogonal to the subspace
$V^{0}_{t}(F)$ and $F_{\nu}(u)=v_{\nu}, \nu=1,2,... .$
\end{col}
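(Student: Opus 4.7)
The plan is to read this corollary as a direct restatement of the characterization that is already implicit in the proof of Theorem~2.1, so very little new work should be required. I would first fix notation: a solution $u$ of the Variational Problem must lie in the affine subspace $V_{t}(F,v_{1},\dots,v_{N})=V^{0}_{t}(F)+g$, so the conditions $F_{\nu}(u)=v_{\nu}$ are built into the definition. The real content is to show that "solution" is equivalent to the $H_{t}(M)$-orthogonality of $u$ to $V^{0}_{t}(F)$.

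For the forward implication I would argue as follows. Given that the Variational Problem has a unique solution (Theorem~2.1), and that the element $s_{t}(v)=g-g_{0}$ constructed in that proof is such a solution and is by construction orthogonal to $V^{0}_{t}(F)$, the unique solution $u$ must coincide with $s_{t}(v)$ and therefore also be orthogonal to $V^{0}_{t}(F)$. Alternatively, one can derive orthogonality directly from the minimization property without invoking uniqueness: if $h\in V^{0}_{t}(F)$ is arbitrary and $\sigma\in\mathbb{C}$, then $u+\sigma h\in V_{t}(F,v_{1},\dots,v_{N})$, so the real-valued function
\[
\sigma\mapsto \|(1+\Delta)^{t/2}(u+\sigma h)\|^{2}=\|(1+\Delta)^{t/2}u\|^{2}+2\,\mathrm{Re}\,\bigl(\overline{\sigma}\langle u,h\rangle_{H_{t}(M)}\bigr)+|\sigma|^{2}\|(1+\Delta)^{t/2}h\|^{2}
\]
attains its minimum at $\sigma=0$, which forces $\langle u,h\rangle_{H_{t}(M)}=0$.

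For the reverse implication I would reuse the Pythagorean computation already performed in the proof of Theorem~2.1. Assume $F_{\nu}(u)=v_{\nu}$ and $u\perp V^{0}_{t}(F)$ in $H_{t}(M)$. Any element of $V_{t}(F,v_{1},\dots,v_{N})$ can be written uniquely as $u+h$ with $h\in V^{0}_{t}(F)$, and orthogonality yields
\[
\|(1+\Delta)^{t/2}(u+h)\|^{2}=\|(1+\Delta)^{t/2}u\|^{2}+\|(1+\Delta)^{t/2}h\|^{2}\geq \|(1+\Delta)^{t/2}u\|^{2},
\]
so $u$ minimizes the functional on the affine set and is therefore a solution.

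There is essentially no hard step here; the main point is simply to recognize that the proof of Theorem~2.1 already exhibits the solution as the orthogonal projection residue $g-g_{0}$, and that the standard characterization of orthogonal projections in a Hilbert space (minimization on an affine subspace is equivalent to orthogonality to the underlying closed subspace) is exactly what the corollary asserts. The only mild subtlety worth noting in writing is to keep the complex scalar $\sigma$ when varying along $h$, so that the real and imaginary parts of $\langle u,h\rangle_{H_{t}(M)}$ both vanish.
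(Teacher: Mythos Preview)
Your proposal is correct and follows exactly the line the paper intends: the corollary is stated without a separate proof precisely because the argument of Theorem~2.1 already exhibits the solution as $g-g_{0}$, orthogonal to $V^{0}_{t}(F)$, and the Pythagorean identity there gives the converse. Your variational derivation of orthogonality (varying $\sigma\in\mathbb{C}$) is a harmless elaboration of the same idea.
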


The next Theorem gives the characteristic property of splines.

\begin{thm}
If in addition to conditions of the Theorem 2.1 distributions
$F_{1},...,F_{N}$ satisfy \textsl{Independence  Assumption} then a
function $s_{t}(v)\in H_{t}(M),t>t_{0}$ is a solution of
\textsl{The Variational Problem} if and only if it satisfies the
following equation in the sense of distributions
\begin{equation}
(1+\Delta)^{t}s_{t}(v)=\sum_{\nu=1}^{N}
\alpha_{\nu}(s_{t}(v))\overline{F_{\nu}}.
\end{equation}

In other words, for any  smooth $\psi$

$$<(1+\Delta)^{t}s_{t}(v),\psi>_{L_{2}(M)}=\sum_{\nu=1}^{N}
\alpha_{\nu}(s_{t}(v))\overline{F_{\nu}(\psi )}.$$

\end{thm}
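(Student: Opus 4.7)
The plan is to prove both directions of the equivalence by combining Corollary 2.1.1 (the orthogonality characterization of the spline) with the classical Hilbert-space fact: a continuous linear functional on a Hilbert space that vanishes on the intersection of the kernels of finitely many continuous linear functionals lies in their linear span.

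For the forward direction, I would start with $s=s_{t}(v)$, a solution of the \emph{Variational Problem}. By Corollary 2.1.1, $s$ is orthogonal in $H_{t}(M)$ to the closed subspace $V^{0}_{t}(F)=\bigcap_{\nu}\ker F_{\nu}$, so $\langle s,\psi\rangle_{H_{t}(M)}=0$ for every $\psi\in V^{0}_{t}(F)$. Since $t>t_{0}$, every $F_{\nu}\in H_{-t_{0}}(M)\subset H_{-t}(M)$ is continuous on $H_{t}(M)$. The conjugate functional $\psi\mapsto\overline{\langle s,\psi\rangle_{H_{t}(M)}}$ is then linear, continuous on $H_{t}(M)$, and vanishes on $\bigcap_{\nu}\ker F_{\nu}$; the Hilbert-space fact produces complex coefficients $c_{1},\dots,c_{N}$ with $\overline{\langle s,\psi\rangle_{H_{t}(M)}}=\sum_{\nu}c_{\nu}F_{\nu}(\psi)$. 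Setting $\alpha_{\nu}(s)=\overline{c_{\nu}}$ and rewriting the $H_{t}$ inner product through the duality pairing given by the self-adjointness of $(1+\Delta)^{t/2}$, namely $\langle s,\psi\rangle_{H_{t}(M)}=\langle(1+\Delta)^{t}s,\psi\rangle_{L_{2}(M)}$ interpreted between $H_{-t}(M)$ and $H_{t}(M)$, yields exactly equation (2.1).

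For the converse, assume $s\in H_{t}(M)$ satisfies equation (2.1), together with the interpolation $F_{\nu}(s)=v_{\nu}$ built into the Variational Problem. For any $h\in V^{0}_{t}(F)$ one has $F_{\nu}(h)=0$, and pairing (2.1) against $h$ gives
\[
\langle s,h\rangle_{H_{t}(M)}=\langle(1+\Delta)^{t}s,h\rangle_{L_{2}(M)}=\sum_{\nu=1}^{N}\alpha_{\nu}(s)\overline{F_{\nu}(h)}=0.
\]
Thus $s$ is orthogonal to $V^{0}_{t}(F)$ in $H_{t}(M)$, and Corollary 2.1.1 identifies $s$ as the unique solution of the Variational Problem.

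The main technical hurdle is keeping the conjugations straight: the $H_{t}(M)$ inner product is sesquilinear whereas the $F_{\nu}$ are linear on smooth test functions, so the representation step must be executed on the conjugated, linear version of $\psi\mapsto\langle s,\psi\rangle_{H_{t}}$ and the coefficients conjugated back when returning to (2.1). Note also that the \textbf{Independence Assumption} is not strictly required to produce the $\alpha_{\nu}$; it serves to make them unique by ensuring the $F_{\nu}$ are linearly independent, which will matter in the passage to Theorem 1.1 and the solvability of system (1.5).
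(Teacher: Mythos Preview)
Your proof is correct, and the converse direction matches the paper's almost verbatim. For the forward direction you take a different route: you invoke the abstract linear-algebra fact that any continuous functional vanishing on $\bigcap_{\nu}\ker F_{\nu}$ lies in the span of the $F_{\nu}$, whereas the paper argues constructively via the Independence Assumption. It picks $\vartheta_{\nu}\in C_{0}^{\infty}(M)$ with $F_{\mu}(\vartheta_{\nu})=\delta_{\mu\nu}$, observes that $\psi-\sum_{\nu}F_{\nu}(\psi)\vartheta_{\nu}\in V^{0}_{t}(F)$ for every test function $\psi$, applies the orthogonality (2.2), and reads off the explicit formula $\alpha_{\nu}(s)=\int_{M}(1+\Delta)^{t}s\,\overline{\vartheta_{\nu}}\,dx$, afterwards checking that this is independent of the choice of $\{\vartheta_{\nu}\}$. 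Your approach is slicker and, as you correctly note, shows that the \emph{existence} of the $\alpha_{\nu}$ does not actually require the Independence Assumption (only their uniqueness does); the paper's approach has the compensating advantage of producing a concrete expression for the coefficients in terms of the auxiliary functions, which is in the spirit of the explicit Fourier formulas that follow.
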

\begin{proof}
We already know that every solution of \textsl{The Variational
Problem} is orthogonal to $V^{0}_{t}(F)$ in the Hilbert
space $H_{t}(M)$ i.e. for any $h\in V^{0}_{t}(F)$
\begin{equation}
0=<s_{t}(v),h>_{H_{t}(M)}
=\int_{M}(1+\Delta)^{t/2}s_{t}(v)\overline{(1+\Delta)^{t/2}h}.
\end{equation}

According to our \textbf{ Independence Assumption } there exist a
set of functions
 $\vartheta=\{\vartheta_{\nu}\}, \nu=1,2,...,N,$ from $C_{0}^{\infty}(M)$
such that $F_{\mu}(\vartheta_{\nu})=\delta_{\mu\nu}$, where
$\delta_{\nu\mu}$ is the Kronecker delta. Then for any $\psi\in
C_{0}^{\infty}(M)$ the function

$$\psi-\sum_{\nu=1}^{N}F_{\nu}(\psi)\vartheta_{\nu}
$$
belongs to $V^{0}_{t}(F)$ and because of (2.2)

$$
0=<s_{t}(v),\psi-\sum_{\nu=1}^{N}F_{\nu}(\psi)\vartheta_{\nu}>_{H_{t}(M)}=
\int_{M}(1+\Delta)^{t/2}s_{t}(v)
\overline{(1+\Delta)^{t/2}(\psi-\sum_{\nu=1}^{N}F_{\nu}(\psi)\vartheta_{\nu})}=
$$
$$
\int_{M} (1+\Delta)^{t}s_{t}(v)\left(\overline{\psi-
\sum_{\nu=1}^{N}F_{\nu}(\psi)\vartheta_{\nu}}\right).
$$
In other words,

$$
\int_{M}(1+\Delta)^{t}s_{t}(v)\overline{\psi}=\sum
_{\nu=1}^{N}\overline{F_{\nu}(\psi)}\int_{M}(1+\Delta)
^{t}s_{t}(v)\overline{\vartheta_{\nu}}dx.
$$

If we set
$$
\alpha_{\nu}(s_{t}(v),\vartheta)=\int_{M}(1+\Delta)^{t}s_{t}(v)\overline{\vartheta_{\nu}}dx,
$$
we obtain  that $(1+\Delta)^{t}s_{t}(v)$ is a distribution of the
form

$$
(1+\Delta)^{t}s_{t}(v)=\sum_{\nu=1}^{N}\alpha_{\nu}(s_{t}(v),\vartheta)
\overline{F_{\nu}},
$$
where
$$
\overline{F_{\nu}}(\psi)=\overline{F_{\nu}(\psi)}.
$$
 So
every solution of the variational problem is a solution of (2.1).

Note, that if $\zeta=\{\zeta_{\nu}\}$ is another
$C_{0}^{\infty}(M)-$family for which
$F_{\nu}(\zeta_{\mu})=\delta_{\nu\mu}$, then we have the identity
$$
\sum_{\nu=1}^{N}(\alpha_{\nu}(s_{t}(v),\vartheta)-\alpha_{\nu}(s_{t}(v),\zeta))F_{\nu}=0
$$
which implies  that
$$
\alpha_{\nu}(s_{t}(v),\vartheta)-\alpha_{\nu}(s_{t}(v),\zeta)=0
$$
i.e. coefficients
$\alpha_{\nu}(s_{t}(v),\vartheta)=\alpha_{\nu}(s_{t}(v))$ are
independent of the choice of the family of functions $\vartheta$.

Conversely, if $u$ is a solution of (2.1) then since $F_{\nu}$
belongs to the space $H_{t_{0}}(M),$ and $t>t_{0}\geq 0,$ the
Regularity Theorem for elliptic operator $(1+\Delta)^{t}$  implies
that $u\in H_{-t_{0}+ 2t}(M)\subset H_{t}(M)$
 and for any $h\in V^{0}_{t}(F)$

$$
<u,h>_{H_{t}(M)}=<(1+\Delta)^{t/2}u,(1+\Delta)^{t/2}h>=<(1+\Delta)^{t}u,h>=
\sum_{\nu=1}^{N}\alpha_{\nu}(u)F_{\nu}(h)=0,
$$
that shows that $u$ is a the solution of \textsl{The Variational
Problem}.
\end{proof}

 As a consequence of the Theorem we obtain the fact that the set
of all solutions of \textsl{The Variational Problem} is linear. In
particular, every spline $s_{t}(v)\in S(F,t)$ has the following
representation through its values $F_{\nu}(s_{t}(v))=v_{\nu},
\nu=1,...,N,$
 on $X$:
\begin{equation}
s_{t}(v)=\sum_{\nu=1}^{N}v_{\nu}l^{\nu},
\end{equation}
where $F_{\nu}(s_{t}(v))=v_{\nu},$ and $l^{\nu}\in S(F,t),
\nu=1,2,... ,N, $ is so called Lagrangian spline that defined by
conditions $F_{\mu}(l^{\nu})=\delta_{\nu\mu}, \mu=1,2,...,N.$

To obtain another representation of splines we will need the
solutions $E_{\nu}^{t}, $ of the following distributional equations
\begin{equation}
(1+\Delta)^{t}E_{\nu}^{t}=\overline{F_{\nu}}.
\end{equation}

To find $E_{\nu}^{t}$ we note that in the sense of distributions

\begin{equation}
\overline{F_{\nu}}=\sum_{j=0}^{\infty}
\overline{F_{\nu}(\varphi_{j})}\varphi_{j}
\end{equation}
which shows that
\begin{equation}
E_{\nu}^{t}=\sum_{j=0}^{\infty}(1+\lambda_{j})^{-t}
 \overline{F_{\nu}(\varphi_{j})}\varphi_{j}.
\end{equation}

According to the Theorem 2.2 every spline $s_{t}(v)$ is a solution
of

$$
(1+\Delta)^{t}s_{t}(v)=
\sum_{\nu=1}^{N}\alpha_{\nu}(s_{t}(v))\overline{F_{\nu}},
$$
and along with (2.4) it gives
$$
(1+\Delta)^{t}s_{t}(v)=\sum _{\nu}\alpha_{\nu}(s_{t}(v))
 \sum_{j}\overline{F_{\nu}(\varphi_{j})}\varphi_{j},
$$
that implies the following representation
\begin{equation}
s_{t}(v)=\sum_{\nu=1}^{N}\alpha_{\nu}(s_{t}(v))E_{\nu}^{t}.
\end{equation}

Note that so far we have used just the assumption that $t>t_{0}$.
To get more information about $s_{t}(v)$ we will need a stronger
assumption that $t>t_{0}+d/2$.

 The next Theorem shows how to find an explicit connection
between a sequence of values $(v_{1},...,v_{N})$ and corresponding
sequence $(\alpha_{1}(s_{t}(v)),...,\alpha_{N}(s_{t}(v)))$ in the
case when $t>t_{0}+d/2$. We also assume that Independence
 Assumption is satisfied.

\begin{thm}
If $0=\lambda_{0}<\lambda_{2}\leq.....$ is the sequence of eigen
values of $\Delta $ and $\varphi_{0}, \varphi_{1},...$ is the
corresponding sequence of orthonormal eigen functions, then for
any spline $s_{t}(v)\in S(F,t),$ such that $ F=\{F_{\nu}\},$
$F_{\nu}\in H_{-t_{0}}(M), t>t_{0}+d/2,
F_{\nu}(s_{t}(v))=v_{\nu},\nu=1,...,N,$
 the vector
$\alpha(s_{t}(v))=(\alpha_{1}(s_{t}(v)),...,\alpha_{N}(s_{t}(v)))$
is the solution of the following $N\times N$ system
\begin{equation}
\sum_{\nu=1}^{N}\beta_{\nu\mu}\alpha_{\nu}(s_{t}(v))=v_{\mu},
\mu=1,...,N,
\end{equation}
where
\begin{equation}
\beta_{\nu\mu}=\sum_{j=0}^{\infty}(1+\lambda_{j})^{-t}
\overline{F_{\nu}(\varphi_{j})}F_{\mu}(\varphi_{j}).
\end{equation}
\end{thm}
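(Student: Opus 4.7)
The plan is to combine the two representations of the spline already obtained in this section. From Theorem 2.2 we have
$$s_t(v) = \sum_{\nu=1}^N \alpha_\nu(s_t(v)) E_\nu^t,$$
where $E_\nu^t$ is the solution of $(1+\Delta)^t E_\nu^t = \overline{F_\nu}$ given by the series (2.6). Applying the functional $F_\mu$ to both sides and using the interpolation condition $F_\mu(s_t(v)) = v_\mu$ reduces the theorem to the identity $F_\mu(E_\nu^t) = \beta_{\nu\mu}$, which upon substitution yields exactly the claimed $N\times N$ system.

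To establish this identity I would substitute the eigenfunction expansion (2.6) into $F_\mu(E_\nu^t)$ and exchange $F_\mu$ with the infinite sum. The justification of this exchange is the main technical point. Since $\overline{F_\nu} \in H_{-t_0}(M)$ and $(1+\Delta)^{-t}$ is an isomorphism of $H_{-t_0}(M)$ onto $H_{2t-t_0}(M)$, the element $E_\nu^t$ lies in $H_{2t-t_0}(M)$, which embeds continuously in $H_{t_0}(M)$ as soon as $t \geq t_0$. The partial sums of (2.6) therefore converge to $E_\nu^t$ in $H_{2t-t_0}(M)$, hence also in $H_{t_0}(M)$, and since $F_\mu$ is continuous on $H_{t_0}(M)$ by hypothesis, we may pass $F_\mu$ through the sum to obtain
$$F_\mu(E_\nu^t) = \sum_{j=0}^\infty (1+\lambda_j)^{-t} \overline{F_\nu(\varphi_j)} F_\mu(\varphi_j) = \beta_{\nu\mu}.$$

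The remaining issue is absolute convergence of $\beta_{\nu\mu}$ as a numerical series, and this is where the stronger hypothesis $t > t_0 + d/2$ enters. Using the bound $|F_\nu(\varphi_j)| \leq C(M,F)(1+\lambda_j)^{t_0/2}$ that comes from continuity of $F_\nu$ on $H_{t_0}(M)$, the general term in $\beta_{\nu\mu}$ is dominated by $C(M,F)^2(1+\lambda_j)^{t_0 - t}$, and the convergence of the Laplace--Beltrami zeta series $\sum_j (1+\lambda_j)^{-\tau}$ for $\tau > d/2$ forces the requirement $t - t_0 > d/2$. This is exactly the argument already supplied immediately after the statement of Theorem 1.1, so the verification requires no new work. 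Together these steps give the linear system (2.8)--(2.9), and solvability of that system is guaranteed a posteriori by the existence and uniqueness result of Theorem 2.1.
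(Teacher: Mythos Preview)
Your proposal is correct and follows the same route as the paper: represent $s_t(v)$ via (2.7) as $\sum_\nu \alpha_\nu E_\nu^t$, apply $F_\mu$, and identify $F_\mu(E_\nu^t)=\beta_{\nu\mu}$ from the expansion (2.6). The only difference is that you spell out the justification for passing $F_\mu$ through the eigenfunction series and for absolute convergence of $\beta_{\nu\mu}$, whereas the paper does the former tacitly and refers back to the introduction for the latter.
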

\begin{proof}
From (2.6)
$$
c_{j}(E^{t}_{\nu})=(1+\lambda_{j})^{-t}\overline{F_{\nu}(\varphi_{j})},
$$
or
$$
E_{\nu}^{t}=\sum_{j=0}^{\infty}(1+\lambda_{j})^{-t}
\overline{F_{\nu}(\varphi_{j})}\varphi_{j}.
$$
Combining this formula with (2.7) we obtain
$$
v_{\mu}=F_{\mu}(s_{t}(v))=\sum_{\nu=1}^{N}\alpha_{\nu}F_{\mu}(E_{\nu}^{t})=
$$
$$
\sum_{\nu=1}^{N}\alpha_{\nu}(s_{t}(v))\beta_{\nu\mu},
$$
where
$$
\beta_{\nu\mu}=\sum_{j=0}^{\infty}(1+\lambda_{j})^{-t}
\overline{F_{\nu}(\varphi_{j})}F_{\mu}(\varphi_{j}).
$$

\end{proof}

Note that according to the existence and uniqueness result the
system (2.8) is \textsl{always } solvable.

It was explained in the introduction that the series (2.9) is
absolutely convergent if $t>t_{0}+d/2$.

The following Theorem gives the Fourier coefficients of splines.
\begin{thm}
If the Independence Assumption is satisfied and $t>t_{0}+d/2$,
then the Fourier coefficients of the spline $s_{t}(v)\in S(F,t)$
are given by the following formulas
$$
c_{j}(s_{t}(v))=<s_{t}(v),\varphi_{j}>=
(1+\lambda_{j})^{-t}\sum_{\nu=1}^{N}
\alpha_{\nu}(s_{t}(v))\overline{F_{\nu}(\varphi_{j})}, j=0,1, ...
$$
where the vector  $(\alpha_{1}(s_{t}(v)),
...,\alpha_{N}(s_{t}(v)))$ is the solution of the corresponding
system (2.8).
\end{thm}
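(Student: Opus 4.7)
The plan is to derive the Fourier coefficient formula by directly applying the distributional identity from Theorem 2.2 to the eigenfunctions $\varphi_j$. Since Theorem 2.3 has already established that the vector $(\alpha_1(s_t(v)),\dots,\alpha_N(s_t(v)))$ solves the system (2.8), the only remaining task is to read off the Fourier expansion of $s_t(v)$.

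First, I would invoke Theorem 2.2, which asserts that
$$(1+\Delta)^t s_t(v) = \sum_{\nu=1}^N \alpha_\nu(s_t(v)) \overline{F_\nu}$$
holds in the distributional sense. Concretely, for every smooth test function $\psi$,
$$\int_M (1+\Delta)^t s_t(v)\, \overline{\psi}\,dx = \sum_{\nu=1}^N \alpha_\nu(s_t(v)) \overline{F_\nu(\psi)}.$$
I would then choose $\psi=\varphi_j$, which lies in $C^\infty(M)$ and therefore in every Sobolev space $H_\tau(M)$, so the pairing is legitimate.

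Second, I would evaluate the left-hand side spectrally. Using the self-adjointness of $(1+\Delta)^{t/2}$ and the eigenvalue relation $(1+\Delta)^{t/2}\varphi_j = (1+\lambda_j)^{t/2}\varphi_j$, one computes
$$\int_M (1+\Delta)^t s_t(v)\, \overline{\varphi_j}\,dx = \bigl\langle (1+\Delta)^{t/2} s_t(v),\, (1+\Delta)^{t/2}\varphi_j\bigr\rangle = (1+\lambda_j)^t \,c_j(s_t(v)),$$
where I have used that $s_t(v)\in H_t(M)$, so $(1+\Delta)^{t/2} s_t(v)\in L_2(M)$, making the $H_t$-inner product well defined. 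Equating this with the right-hand side and dividing by $(1+\lambda_j)^t$ yields the stated formula.

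The only subtle point is absolute convergence of the resulting series for $s_t(v)$, but the hypothesis $t>t_0+d/2$ together with the eigenfunction estimate $|F_\nu(\varphi_j)|\le C(M,F)(1+\lambda_j)^{t_0/2}$ and the convergence of $\sum_j (1+\lambda_j)^{-(t-t_0)}$ (the $\zeta$-function argument already reproduced after Theorem 1.1) justifies termwise reconstruction of $s_t(v)$ from its Fourier coefficients. Thus there is no real obstacle; the theorem follows immediately once one combines the distributional equation (2.1), the spectral action of $(1+\Delta)^t$ on each $\varphi_j$, and the solvability of (2.8) established in Theorem 2.3.
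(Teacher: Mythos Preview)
Your proof is correct and follows essentially the same route as the paper: both arguments apply the distributional identity $(1+\Delta)^t s_t(v)=\sum_\nu \alpha_\nu(s_t(v))\overline{F_\nu}$ from Theorem~2.2 and read off Fourier coefficients via the spectral action of $(1+\Delta)^t$ on each $\varphi_j$. The only cosmetic difference is that the paper expands both sides in the eigenbasis and matches coefficients, whereas you pair directly with a single $\varphi_j$; these are the same computation.
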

\begin{proof}
If
$$
s_{t}(v)=\sum_{j=0}^{\infty}c_{j}(s_{t}(v))\varphi_{j}
$$
then
$$
(1+\Delta)^{t}s_{t}(v)=\sum_{j=0}^{\infty}(1+\lambda_{j})^{t}c_{j}(s_{t}(v))\varphi_{j},
$$
and at the same time by (2.1)
$$
(1+\Delta)^{t}s_{t}(v)=\sum_{\nu=1}^{N}\alpha_{\nu}(s_{t}(v))\overline{F_{\nu}}.
$$

Combining last two formulas with (2.5) we obtain
$$
c_{j}(s_{t}(v))=\sum_{j}\left(\sum_{\nu}\alpha_{\nu}(s_{t}(v))
\overline{F_{\nu}(\varphi_{j})}\right)\varphi_{j}.
$$
The Theorem 2.4 is proved.

\end{proof}

\section{Another extremal property of variational splines}

The goal of the section is to show that variational splines
provide an approximation which is optimal.

Recall that for a given family of distributions $F=\{F_{\nu}\},
 \nu=1,2,...,N,$ from a $H_{-t_{0}}(M),t_{0}\geq 0,$ and
 a set of complex numbers $v_{1},v_{2},...,v_{N}$
 the notation
 $V_{t}(F;v_{1},...v_{N})$ means the linear manifold of all
 functions $f$ from $H_{t}(M), t>t_{0},$ such that
 $F_{\nu}(f)=v_{\nu}, \nu=1,2,...,N.$
\begin{lem}
For any $g\in V_{t}(F;v_{1},...v_{N})$
$$
\|g\|_{t}\geq
\left(\sum_{\nu=1}^{N}v_{\nu}\alpha_{\nu}(s_{t}(v))\right)^{1/2}
$$
where $s_{t}(v)$ is the unique spline from $
V_{t}(F;v_{1},...v_{N}).$
\end{lem}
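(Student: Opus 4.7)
The plan is to combine two facts that have already been established: the variational minimization property of $s_t(v)$ from Theorem 2.1, and the distributional identity from Theorem 2.2.

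First I would invoke Theorem 2.1 (or the \emph{Variational Problem} itself): since $s_t(v)$ is the unique element of $V_t(F;v_1,\dots,v_N)$ that minimizes $u \mapsto \|(1+\Delta)^{t/2}u\|$, we immediately have $\|g\|_t \geq \|s_t(v)\|_t$ for every $g \in V_t(F;v_1,\dots,v_N)$. So it suffices to evaluate $\|s_t(v)\|_t^2$ in closed form and show it equals $\sum_{\nu=1}^N v_\nu \alpha_\nu(s_t(v))$.

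For the second step, I would use self-adjointness of $(1+\Delta)^{t/2}$ to write
$$\|s_t(v)\|_t^2 = \langle (1+\Delta)^{t/2}s_t(v),(1+\Delta)^{t/2}s_t(v)\rangle_{L_2(M)} = \langle (1+\Delta)^t s_t(v), s_t(v)\rangle_{L_2(M)},$$
and then apply the distributional equation
$$(1+\Delta)^t s_t(v) = \sum_{\nu=1}^N \alpha_\nu(s_t(v))\,\overline{F_\nu}$$
from Theorem 2.2. Taking $\psi = s_t(v) \in H_t(M)$ in the pairing identity stated right after Theorem 1.1 is legitimate, and it yields
$$\|s_t(v)\|_t^2 = \sum_{\nu=1}^N \alpha_\nu(s_t(v))\,\overline{F_\nu(s_t(v))} = \sum_{\nu=1}^N \alpha_\nu(s_t(v))\,\overline{v_\nu}.$$
Under the Reality Assumption referenced in Theorem 1.1, the right-hand side coincides with $\sum_\nu v_\nu \alpha_\nu(s_t(v))$ and is a nonnegative real number. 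Chaining this with the minimization inequality and taking square roots gives the claim.

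The only subtle point, and the step I would check most carefully, is the validity of testing the distributional equation against $\psi = s_t(v)$ itself. This requires knowing that $s_t(v) \in H_t(M)$, which we have by definition, and that the pairing extended in the sentence after Theorem 1.1 holds for all $\psi \in H_t(M)$; everything else is bookkeeping with conjugates. No further obstacle is expected because the existence/uniqueness argument already guarantees that the scalar $\sum_\nu \alpha_\nu(s_t(v))\,\overline{v_\nu}$ equals the manifestly nonnegative quantity $\|s_t(v)\|_t^2$, so extracting a square root is automatic.
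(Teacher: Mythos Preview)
Your proposal is correct and follows essentially the same route as the paper: first use the minimization property of $s_t(v)$ to reduce to computing $\|s_t(v)\|_t$, then expand this norm via $\langle (1+\Delta)^t s_t(v), s_t(v)\rangle$ and substitute the distributional identity from Theorem~2.2. The paper's version is terser and glosses over the conjugation and the legitimacy of pairing against $s_t(v)$ itself, so your extra care there is welcome but not a different argument.
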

\begin{proof}
Let us note that the distance from zero to the subspace $V_{t}(F,
v_{1},...,v_{N})$ in the metric of the space $H_{t}(M)$ is exactly
the Sobolev norm of the spline  $s_{t}(v)\in V_{t}(F, v_{1},
...,v_{N}).$
 This norm can be expressed in
terms of the sequences $v_{1}, ... ,v_{N}$ and
$\alpha_{1}(s_{t}(v)), ... , \alpha_{N}(s_{t}(v))$. Indeed,
$$
\|s_{t}(v)\|_{H_{t}(M)}=<(1+\Delta)^{t/2}s_{t}(v),(1+\Delta)^{t/2}s_{t}(v)>^{1/2}=
<(1+\Delta)^{t}s_{t}(v),s_{t}(v)>^{1/2}=
$$
$$
<\sum_{\nu=1}^{N}\alpha_{\nu}(s_{t}(v))F_{\nu},s_{t}(v)>^{1/2}=
\left(\sum_{\nu=1}^{N}\alpha_{\nu}(s_{t}(v))v_{\nu}\right)^{1/2}.
$$

The Lemma is proved.
\end{proof}
 It other words  the intersection
$$
Q(F,f,t,K)=V_{t}(F, v_{1}, ..., v_{N})\bigcap B_{t}(K),
$$
where $B_{t}(K)$ is the ball in $H_{t}(M)$ of radius $K$, is not
empty if and only if

$$
K\geq\|s_{t}(v)\|_{H_{t}(M)}=\left(\sum_{\nu=1}^{N}\alpha_{\nu}(s_{t}(v))v_{\nu}\right)^{1/2}.
$$

 By the very definition spline
$s_{t}(v)$ is a solutions of an optimization problem. Now we prove
another extremal property of $s_{t}(v)$.

\begin{lem}
The function  $s_{t}(v)$ is the center of the convex, closed and
bounded set  $Q(F, v, t, K)$ for any $K\geq
\|s_{t}(v)\|_{H_{t}(M)}$ .
\end{lem}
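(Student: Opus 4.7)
The plan is to exploit the fact, established in Corollary 2.1, that any solution of the Variational Problem is orthogonal in $H_t(M)$ to the subspace $V^0_t(F)$. This orthogonality will let me show that $Q(F,v,t,K)$ is symmetric with respect to $s_t(v)$, which is the hallmark of the Chebyshev center.

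The key step is the following reflection argument. Given any $g\in Q(F,v,t,K)$, form $g^{\ast}=2s_t(v)-g$. The interpolation conditions are preserved by linearity: $F_\nu(g^{\ast})=2F_\nu(s_t(v))-F_\nu(g)=2v_\nu-v_\nu=v_\nu$. For the norm bound, set $h=s_t(v)-g$, so $h\in V^0_t(F)$ because $F_\nu(h)=v_\nu-v_\nu=0$. Since $s_t(v)\perp V^0_t(F)$ in $H_t(M)$, we have $\langle s_t(v),h\rangle_{H_t(M)}=0$, hence
\[
\|g^{\ast}\|_t^2=\|s_t(v)+h\|_t^2=\|s_t(v)\|_t^2+\|h\|_t^2
=\|s_t(v)-h\|_t^2=\|g\|_t^2\leq K^2.
\]
Therefore $g^{\ast}\in Q(F,v,t,K)$.

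Having shown $Q(F,v,t,K)$ is invariant under the reflection $g\mapsto 2s_t(v)-g$, the bound on $\|s_t(v)-g\|_t$ is immediate: for any $g\in Q(F,v,t,K)$ the pair $g$ and $g^{\ast}=2s_t(v)-g$ both belong to $Q(F,v,t,K)$, and
\[
\|s_t(v)-g\|_t=\tfrac{1}{2}\|g^{\ast}-g\|_t\leq \tfrac{1}{2}\,\operatorname{diam} Q(F,v,t,K),
\]
which is the desired centering property (1.7). This also shows that $s_t(v)\in Q(F,v,t,K)$ itself, since the hypothesis $K\geq\|s_t(v)\|_{H_t(M)}$ guarantees $s_t(v)\in B_t(K)$ and the interpolation conditions hold by definition of $s_t(v)$; the non-emptiness of $Q(F,v,t,K)$ was already noted just before the lemma.

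I do not expect a serious obstacle. The only subtlety is making sure that the orthogonality $s_t(v)\perp V^0_t(F)$ really comes from Corollary 2.1 applied to $s_t(v)=g-g_0$ in the proof of Theorem 2.1, and that the set $Q(F,v,t,K)$ is indeed convex, closed, and bounded (each property is routine: convexity and boundedness are obvious, and closedness follows from the continuity of each $F_\nu$ on $H_t(M)$ together with the weak compactness of the ball $B_t(K)$). The whole argument is the standard Chebyshev-center / Golomb--Weinberger symmetry argument, adapted to the Sobolev Hilbert space setting on $M$.
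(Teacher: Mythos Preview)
Your argument is correct and is essentially identical to the paper's own proof: both establish the reflection symmetry $g\mapsto 2s_{t}(v)-g$ of $Q(F,v,t,K)$ by writing $g=s_{t}(v)-h$ with $h\in V^{0}_{t}(F)$ and invoking the orthogonality $s_{t}(v)\perp V^{0}_{t}(F)$ to get $\|s_{t}(v)+h\|_{t}=\|s_{t}(v)-h\|_{t}$. You go slightly further by also spelling out the diameter inequality (which the paper records separately as a corollary) and the routine topological properties of $Q(F,v,t,K)$.
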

\begin{proof}

We will show that if
$$s_{t}(v)+h\in Q(F,v,t,K)
$$
for some function $h$ from the Sobolev space $H_{t}(M)$ then the
function $s_{t}(v)-h$  also belongs to the same intersection.
 Indeed the last assumption
shows that for any $\nu$, $F_{\nu}(h)=0$  and then by (2.2)

$$\int_{M}(1+ \Delta)^{t/2}s_{t}(v)\overline{(1+\Delta)^{t/2}h}=0.$$

But then

$$\|(1+\Delta)^{t/2}(s_{t}(v)+h)\|=\|(1+\Delta)^{t/2}(s_{t}(v)-h)\|.$$

In other words,
 $$\|(1+\Delta)^{t/2}(s_{t}(v)-h)\|\leq K $$
 and because $F_{\nu}(s_{t}(v)+h)=F_{\nu}(s_{t}(v)-h)$ for any $\nu=1,2,...,N,$
  the function $s_{t}(v)-h$ belongs to
 $Q(F,v,t,K).$

 The Lemma is proved.
\end{proof}

\begin{col}
For any $g\in Q(F,v,t,K)$ the following inequality holds true
$$
\|s_{t}(v)-g\|_{t}\leq\frac{1}{2}diam Q(F,v,t,K).
$$
\end{col}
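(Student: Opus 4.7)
The plan is to exploit the symmetry of $Q(F,v,t,K)$ about $s_{t}(v)$ that was established inside the proof of Lemma 3.2. The lemma shows that whenever $s_{t}(v) + h \in Q(F,v,t,K)$, the reflected point $s_{t}(v) - h$ also lies in $Q(F,v,t,K)$. This is exactly the statement that $s_{t}(v)$ is a center of symmetry of the set, and it is the only input we really need.

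Given any $g \in Q(F,v,t,K)$, I would write $h = g - s_{t}(v)$, so that $g = s_{t}(v) + h$. By the symmetry just recalled, the point $g' := s_{t}(v) - h = 2 s_{t}(v) - g$ also belongs to $Q(F,v,t,K)$. Computing the $H_{t}(M)$-distance between these two elements of $Q(F,v,t,K)$ gives
\begin{equation*}
\|g - g'\|_{t} = \|2(g - s_{t}(v))\|_{t} = 2\,\|g - s_{t}(v)\|_{t}.
\end{equation*}
Since both $g$ and $g'$ lie in $Q(F,v,t,K)$, by the definition of the diameter we have $\|g - g'\|_{t} \leq \operatorname{diam} Q(F,v,t,K)$, and dividing by $2$ yields the desired inequality.

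There is essentially no obstacle here: the statement is a direct corollary of Lemma 3.2, and the only step worth stating carefully is the reflection identity $g' = 2 s_{t}(v) - g \in Q(F,v,t,K)$, which is immediate from the symmetry proved in Lemma 3.2 (it only uses that $F_{\nu}(h) = 0$ for all $\nu$ and the orthogonality relation (2.2), both of which are already available). The diameter estimate then follows by a one-line computation.
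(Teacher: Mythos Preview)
Your proof is correct and is exactly the argument the paper has in mind: the corollary is stated without proof immediately after Lemma 3.2, and the intended derivation is precisely the reflection $g \mapsto g' = 2s_{t}(v) - g$ combined with the definition of the diameter, which you have written out carefully.
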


\section{Spline approximation on manifolds}

In this section we develop the approximation theory by variational
splines in the case when the set of distributions $F_{i}$ is a set
of  delta functions on  certain set of points of $M$.
\begin{defn}
We will say that a finite set of points $X=(x_{1},...,x_{N})$ is a
$\rho$-lattice, if

1) The balls $B(x_{i}, \rho/2)$ are disjoint.

2) The balls $B(x_{i}, \rho)$ form a cover of $M$.
\end{defn}
We will  need the following result from [8], [9].

\begin{lem}
There exist constants $C(M)>0, \rho(M)>0$
 such that for any $\rho<\rho(M)$, any $\rho$-lattice
  $X_{\rho}=\{x_{i}\}$ and for any $f\in H_{2d}(M)$ such that
  $f(x_{\nu})=0$ for all $ x_{\nu}\in X_{\rho},$ the following inequality holds true

$$
\|f\|\leq C(M)\rho^{2d}\|(1+\Delta)^{d}f\|, d=\dim M.
$$
\end{lem}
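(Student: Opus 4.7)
Write $A = 1+\Delta$. The plan is to first establish the quadratic-form inequality
$$\|f\|_{L_2(M)}^{2} \;\le\; C(M)\,\rho^{2d}\,\langle A^{d}f,f\rangle, \qquad (\star)$$
and then convert it into the stated form by Cauchy--Schwarz absorption. The obvious attempt of iterating the standard one-point Poincar\'e inequality $\|f\|^{2}\le C\rho^{2}\langle Af,f\rangle$ only produces $\|f\|\le C_{n}\rho^{2n}\|A^{n}f\|$ for $n$ a power of $2$, and the power-of-$2$ step cannot be interpolated downward to arbitrary $n=d$. So rather than iterate, I would prove $(\star)$ directly, in one shot, by upgrading the covering argument so that each localized estimate already has the full exponent $\rho^{d}$ on the Sobolev seminorm of order $d$.

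Step 1 (local Bramble--Hilbert on enlarged balls). Fix $c_{0}=c_{0}(d)$ large enough so that any Euclidean ball of radius $c_{0}$ contains a set of points that is unisolvent for polynomials of degree strictly less than $d$. Taking $\rho(M)$ so small that $c_{0}\rho$ is below the injectivity radius and below the curvature scale, normal coordinates at each $x_{\nu}$ identify the geodesic ball $B(x_{\nu},c_{0}\rho)$ with a domain bi-Lipschitz to a Euclidean ball of radius $c_{0}\rho$, with constants depending only on $M$. The ball $B(x_{\nu},c_{0}\rho)$ contains at least $\binom{2d-1}{d}$ points of $X_{\rho}$ (by the packing/covering properties of a $\rho$-lattice), all of them zeros of $f$. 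The Bramble--Hilbert lemma together with the stability of polynomial interpolation on the resulting unisolvent set then yields a constant $c_{1}=c_{1}(M,d)$ with
$$\|f\|_{L_{2}(B(x_{\nu},c_{0}\rho))}\;\le\; c_{1}\,\rho^{d}\,|f|_{H^{d}(B(x_{\nu},c_{0}\rho))}.$$

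Step 2 (global sum). The enlarged balls $\{B(x_{\nu},c_{0}\rho)\}$ retain bounded overlap (with constant depending only on $c_{0}$ and the local geometry of $M$). Squaring and summing over $\nu$ gives
$$\|f\|_{L_{2}(M)}^{2}\;\le\; C(M)\,\rho^{2d}\,|f|_{H^{d}(M)}^{2}\;\le\; C(M)\,\rho^{2d}\,\|A^{d/2}f\|_{L_{2}(M)}^{2},$$
using that, on a compact Riemannian manifold, the full $H^{d}$-seminorm is controlled by the graph norm of $A^{d/2}$ (lower-order terms being harmless since $\rho\le\rho(M)$). This is exactly $(\star)$.

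Step 3 (absorption). Rewrite $\|A^{d/2}f\|^{2}=\langle A^{d}f,f\rangle$ and apply Cauchy--Schwarz:
$$\|f\|^{2}\;\le\; C(M)\,\rho^{2d}\,\langle A^{d}f,f\rangle\;\le\; C(M)\,\rho^{2d}\,\|A^{d}f\|\,\|f\|.$$
Dividing by $\|f\|$ yields the claimed inequality. The main obstacle is Step 1: one must control the Bramble--Hilbert constant and the stability constant of polynomial interpolation \emph{uniformly} over the choice of $\rho$-lattice, of center $x_{\nu}$, and of $\rho<\rho(M)$. This is a compactness argument: after rescaling to a Euclidean ball of unit radius, the admissible configurations of lattice points form a precompact family, so a uniform constant exists; the transition from Euclidean to geodesic balls costs only bounded distortion because $M$ is compact and smooth.
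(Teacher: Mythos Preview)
The paper does not supply its own proof of this lemma: it is quoted from the author's earlier work [8], [9] and then used as the base case for the bootstrapping Lemma~4.3. So there is no in-paper argument to compare against. Your route---a local Bramble--Hilbert estimate on enlarged balls, a bounded-overlap summation to pass to $M$, and a Cauchy--Schwarz absorption to convert $\langle A^{d}f,f\rangle$ into $\|A^{d}f\|\,\|f\|$---is the standard way such ``sampling inequalities'' are established in the scattered-data literature, and Steps~2 and~3 are clean.

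The only soft spot is Step~1. Having $\binom{2d-1}{d}$ lattice points in $B(x_{\nu},c_{0}\rho)$ does not by itself give a unisolvent subset for $\mathcal P_{d-1}$, and your compactness argument as written is incomplete: precompactness of the rescaled configurations yields a uniform interpolation constant only after you have checked that every admissible (and every limiting) configuration is non-degenerate, which still has to be argued. A cleaner fix avoids unisolvency entirely. By the covering property every point of $B(x_{\nu},(c_{0}-1)\rho)$ lies within $\rho$ of some lattice point; combining this with the Markov inequality for polynomials (after rescaling to unit size) gives, for $c_{0}$ large depending only on $d$, the norming-set estimate
\[
\max_{x_{i}\in B(x_{\nu},c_{0}\rho)}|p(x_{i})|\;\ge\;\tfrac12\,\|p\|_{L^{\infty}(B(x_{\nu},c_{0}\rho))}\qquad(p\in\mathcal P_{d-1}).
\]
From this and Bramble--Hilbert one obtains $\|f\|_{L^{2}(B)}\le c_{1}\rho^{d}|f|_{H^{d}(B)}$ with $c_{1}=c_{1}(d)$, uniformly in the lattice and in $\rho$, and your Steps~2--3 then go through verbatim.

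A side remark: your opening claim that the power-of-two obstruction cannot be interpolated away is not quite right. From $\|f\|\le a\|A^{1/2}f\|$ one gets $\|f\|\le a^{2s}\|A^{s}f\|$ for every $s\ge\tfrac12$ via the log-convexity bound $\|A^{1/2}f\|\le\|f\|^{1-1/(2s)}\|A^{s}f\|^{1/(2s)}$. The genuine obstruction is rather that the one-step Poincar\'e inequality you posit fails in dimension $d\ge2$, since vanishing at a single point does not control the $L^{2}$ norm through the $H^{1}$ seminorm. That is indeed a good reason to go straight for the order-$d$ local estimate, as you do.
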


The next goal is to extend the last estimate to higher Sobolev
norms.

\begin{thm}
There exist constants $C(M)>0,\rho(M)>0,$ such that for any
$0<\rho<\rho(M)$, any $\rho$-lattice $X_{\rho}=\{x_{\nu}\}$,
  any smooth $f$ which is zero on
$X_{\rho}$ and any $t\geq 0$

$$
\|(1+\Delta)^{t}f\|\leq \left(C(M)\rho^{2d}\right)^{2^{m}}
\|(1+\Delta)^{2^{m}d+t} f\|, t\geq 0
$$
for all $m=0,1,... .$

\end{thm}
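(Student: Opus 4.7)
The plan is induction on $m$, with the inductive hypothesis taken to be the statement uniform in $t\geq 0$, namely $P_m$: for every $t\geq 0$ and every smooth $f$ vanishing on $X_\rho$,
$$\|(1+\Delta)^t f\|\leq (C(M)\rho^{2d})^{2^m}\|(1+\Delta)^{2^md+t}f\|.$$
The step $P_m\Rightarrow P_{m+1}$ is then a one-line chaining: apply $P_m$ to $f$ first at parameter $t$ and then at parameter $t+2^md$, which gives
$$\|(1+\Delta)^t f\|\leq (C\rho^{2d})^{2^m}\|(1+\Delta)^{t+2^md}f\|\leq (C\rho^{2d})^{2^{m+1}}\|(1+\Delta)^{t+2^{m+1}d}f\|.$$
Thus the dyadic exponent $2^m$ is automatic once $P_m$ is known uniformly in $t$.

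So the real work sits in $P_0$: for every $t\geq 0$, $\|(1+\Delta)^t f\|\leq C\rho^{2d}\|(1+\Delta)^{t+d}f\|$. The case $t=0$ is precisely Lemma 4.1. I would upgrade it to general $t$ via log-convexity of $\phi(s):=\log\|(1+\Delta)^s f\|$, which is a standard consequence of the Cauchy--Schwarz inequality $\|(1+\Delta)^{(a+b)/2}f\|^2\leq \|(1+\Delta)^a f\|\cdot\|(1+\Delta)^b f\|$, or equivalently of the spectral expansion $\|(1+\Delta)^s f\|^2=\sum_j(1+\lambda_j)^{2s}|c_j(f)|^2$, which exhibits $\phi$ as the logarithm of a weighted sum of exponentials in $s$. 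The four-point monotonicity for convex functions then yields $\phi(t+d)-\phi(t)\geq \phi(d)-\phi(0)$ for every $t\geq 0$, and Lemma 4.1 rewrites the right-hand side as $\geq -\log(C\rho^{2d})$. Exponentiating produces $P_0$ uniformly in $t$.

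I expect the genuine obstacle to lie exactly in this passage from $t=0$ to $t>0$ within $P_0$. The condition that $f$ vanishes on $X_\rho$ is used only at the zeroth-order level inside Lemma 4.1, and it does not transfer to the shifted function $(1+\Delta)^{t/2}f$, so a direct reapplication of the Lemma to a new function vanishing on $X_\rho$ is unavailable. Log-convexity of Sobolev norms is the natural substitute because it is the one tool that combines the zeroth-order geometric information of Lemma 4.1 with the spectral structure of $(1+\Delta)^t$ to yield the higher-$t$ inequality without demanding any additional geometric input. Once $P_0$ is uniform in $t$, the induction in $m$ is purely mechanical.
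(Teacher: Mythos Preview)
Your argument is correct. Both your proof and the paper's reduce Theorem~4.2 to Lemma~4.1 together with a spectral bootstrapping step; the difference lies only in how that step is organized and phrased. The paper isolates it as an abstract Lemma~4.3: from $\|f\|\leq a\|\Delta^{s}f\|$ it deduces $\|\Delta^{t}f\|\leq a^{m}\|\Delta^{ms+t}f\|$ by writing the hypothesis in Plancherel form, splitting the spectrum at the threshold $\lambda_j=a^{-1/s}$, and multiplying termwise by $a^{2}\lambda_j^{2s}$ (which is $\leq 1$ on the low-frequency side and $\geq 1$ on the high-frequency side) to double the exponent; iteration gives all dyadic $m$ at $t=0$, and a further termwise multiplication by $\lambda_j^{2\tau s}$ shifts to general $t$. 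You reverse the order: log-convexity of $s\mapsto\log\|(1+\Delta)^{s}f\|$ upgrades $t=0$ to all $t\geq 0$ at $m=0$, after which the induction in $m$ is a trivial chaining. The two arguments are essentially the same computation in different clothing---the paper's termwise multiplication is exactly the mechanism that makes the slope $\phi(t+d)-\phi(t)$ nondecreasing---but your convexity packaging is cleaner and makes the structure of the argument more transparent.
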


We will obtain this estimate as a consequence of the following
Lemma which is similar to a Lemma from [10].

\begin{lem}
If for some $f\in H^{2s}(M), a,s>0,$
\begin{equation}
\|f\|\leq a\|\Delta^{s}f\|,
\end{equation}
then for the same $f, a, s$ and all $t\geq 0, m=2^{l}, l=0, 1,
...,$
\begin{equation}
\|\Delta^{t}f\|\leq a^{m}\|\Delta^{ms+t}f\|,
\end{equation}
if $f\in H^{2(ms+t)}(M).$

\end{lem}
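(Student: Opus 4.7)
The plan is to prove the lemma by establishing log-convexity of $r\mapsto\log\|\Delta^r f\|$ on $[0,\infty)$ and then iterating the base hypothesis: the single comparison $\|f\|\le a\|\Delta^s f\|$ controls only one shift-by-$s$ of $f$, but convexity propagates this control uniformly to every further shift by $s$.

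First I would observe that, for $r\ge b\ge 0$ with $f$ in the appropriate Sobolev class, self-adjointness of $\Delta$ lets us write $\|\Delta^r f\|^2=\langle\Delta^{r-b}f,\Delta^{r+b}f\rangle$, and Cauchy-Schwarz gives
$$\|\Delta^r f\|^2\le \|\Delta^{r-b}f\|\cdot\|\Delta^{r+b}f\|.$$
Hence $\phi(r):=\log\|\Delta^r f\|$ is midpoint convex on its domain, and therefore convex. Convexity is equivalent to the monotonicity of $\phi(r+s)-\phi(r)$ in $r$, which multiplicatively says that $\rho_r:=\|\Delta^{r+s}f\|/\|\Delta^r f\|$ is non-decreasing in $r$. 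The hypothesis reads $\rho_0\ge 1/a$, so $\rho_r\ge 1/a$ for every $r\ge 0$, i.e.
$$\|\Delta^r f\|\le a\,\|\Delta^{r+s}f\|,\qquad r\ge 0.$$

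Next I would iterate this bound $m$ times with $r=t,\,s+t,\,2s+t,\ldots,(m-1)s+t$; the assumption $f\in H^{2(ms+t)}(M)$ keeps every intermediate vector in $L_2(M)$, which is what Cauchy-Schwarz requires. The result is the telescoping chain
$$\|\Delta^t f\|\le a\|\Delta^{s+t}f\|\le a^2\|\Delta^{2s+t}f\|\le\cdots\le a^m\|\Delta^{ms+t}f\|,$$
valid for any $m\in\mathbb{N}$ and in particular for $m=2^l$ as stated in the lemma.

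I expect the only technical point worth attention to be the handling of fractional powers $\Delta^r f$ at intermediate $r$ and the careful justification of Cauchy-Schwarz at each step. This is most cleanly done spectrally: expanding $f=\sum_j c_j(f)\varphi_j$ gives $\|\Delta^r f\|^2=\sum_j\lambda_j^{2r}|c_j(f)|^2$, whereupon log-convexity of $r\mapsto\|\Delta^r f\|$ reduces to log-convexity of the moment function $r\mapsto\int\lambda^{2r}\,d\mu(\lambda)$ for the spectral measure $d\mu$ attached to $f$, a one-line Cauchy-Schwarz in the variable $\lambda$. Alternatively, to match the paper's "$m=2^l$" phrasing one can run a direct doubling induction: the identity $\|\Delta^u g\|^2=\langle g,\Delta^{2u}g\rangle\le\|g\|\cdot\|\Delta^{2u}g\|$ upgrades any inequality $\|g\|\le b\|\Delta^u g\|$ to $\|g\|\le b^2\|\Delta^{2u}g\|$, and applying this with $g=\Delta^t f$ at each stage bootstraps the bound through $l=0,1,2,\ldots$.
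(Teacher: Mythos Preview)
Your proof is correct, and in fact slightly stronger than what the paper proves: your log-convexity argument yields the inequality for \emph{every} integer $m\ge 1$, whereas the paper's proof genuinely needs the restriction $m=2^{l}$. The paper argues spectrally from the start: writing $\|f\|^{2}=\sum_{j}|c_{j}|^{2}$ and $\|\Delta^{s}f\|^{2}=\sum_{j}\lambda_{j}^{2s}|c_{j}|^{2}$, it splits the index set at the threshold $\lambda_{j}=a^{-1/s}$, rearranges the hypothesis into an inequality between the two partial sums, and then observes that multiplying both sides by $a^{2}\lambda_{j}^{2s}$ can only improve it (because $a^{2}\lambda_{j}^{2s}\le 1$ on the left and $\ge 1$ on the right). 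This doubles the exponent, giving $\|f\|\le a^{2}\|\Delta^{2s}f\|$, and induction produces the powers of two; a final variant of the same splitting inserts the shift $t$. Your route replaces this threshold-splitting trick by the single Cauchy--Schwarz identity $\|\Delta^{r}f\|^{2}\le\|\Delta^{r-b}f\|\,\|\Delta^{r+b}f\|$, which encodes the same spectral content but packages it as convexity of $r\mapsto\log\|\Delta^{r}f\|$; the monotonicity of the increment ratio then lets you iterate one step at a time rather than by doubling. The doubling variant you sketch at the end is essentially the paper's argument stripped of the explicit spectral splitting.
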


\begin{proof}
Let us remind the reader that $\{\lambda_{j}\}$ is the set of
eigen values of the operator $\Delta$ and $\{\varphi_{j}\}$ is the
set of corresponding orthonormal eigen functions. Let
$\{c_{j}=<f,\varphi_{j}>\}$ be the set of Fourier
 coefficients of the function
$f$ with respect to the orthonormal basis $\{\varphi_{j}\}.$ Using
the Plancherel Theorem we can write our assumption (4.1) in the
form

$$
\|f\|^{2}\leq a^{2}\left(\sum_{\lambda_{j}\leq a^{-1/s}}
\lambda_{j}^{2s}|c_{j} |^{2}+\sum_{\lambda_{j}>
a^{-1/s}}\lambda_{j}^{2s}|c_{j} |^{2}\right).
$$
Since for the first sum $a^{2}\lambda_{j}^{2s}\leq 1$,

$$0\leq\sum_{\lambda_{j}\leq a^{-1/s}}(|c_{j}|^{2}-a^{2}\lambda_{j}^{2s}
|c_{j}|^{2})\leq \sum_{\lambda_{j}>
a^{-1/s}}(a^{2}\lambda_{j}^{2s} |c_{j}|^{2}-|c_{j}|^{2}).
$$

Multiplication of  this inequality by $a^{2}\lambda_{j}^{2s}$
 will only
 improve the existing inequality
and then using the Plancherel Theorem once again we will obtain

$$\|f\|\leq a\|\Delta^{s}f\|\leq a^{2}\|\Delta^{2s}f\|.
$$

It is now clear that using induction we can prove

$$
\|f\|\leq a^{m}\|\Delta^{ms}f\|, m=2^{l}, l\in \mathbb{N}.
$$

But then, using the same arguments we have for any $\tau>0$

$$
0\leq \sum_{\lambda_{j}\leq a^{-1/s}}(a^{2\tau}\lambda_{j}^{2\tau
s} |c_{j}|^{2}
-a^{2(m+\tau)}\lambda_{j}^{2(m+\tau)s}|c_{j}|^{2})\leq
$$
$$\sum_{\lambda_{j}> a^{-1/s}}(a^{2(m+\tau)}\lambda_{j}^{2(m+\tau)s}
|c_{j}|^{2}-a^{2\tau}\lambda_{j}^{2\tau s}|c_{j}|^{2}),
$$
that gives the desired inequality (4.2) if $t=s\tau.$
\end{proof}

To prove the Theorem 4.2 it is enough to apply the last Lemma 4.3
to the Lemma 4.1 with $ a=C(M)\rho^{2d}$.

Now we can formulate and prove our Approximation Theorem.
\begin{thm}
There exist constants $C(M), \rho(M)>0$ such that for any
$0<\rho<\rho(M)$, any $\rho$-lattice  $M_{\rho}$, any smooth
function $f$ and any $t\geq 0$ the following inequality holds true
$$
\|(1+\Delta)^{t}(s_{2^{m}d+t}(f)-f)\|\leq
\left(C(M)\rho^{2}\right)^{2^{m}d} \|(1+\Delta)^{2^{m}+t}f\|,
$$
for any $m=0, 1, ... .$ In particular, if $f$ is an $\omega$-band
limited function, i.e. $f$ is a linear combination of orthonormal
eigen functions whose corresponding eigen values belong to the
interval $[0, \omega],$ then
$$
\|(1+\Delta)^{t}(s_{2^{m}d+t}(f)-f)\|\leq (1+\omega)^{t}
\left(C(M)\rho^{2}(1+\omega)\right)^{2^{m}d}\|f\|,
$$
where $m=0, 1, ... .$

Moreover, if $t>d/2+k$ then there exists a $C(M,t)$ such that

$$\|(s_{2^{m}d+t}(f)(x)-f(x))\|_{C^{k}(M)}\leq \left(C(M,t)\rho^{2}\right)
^{2^{m}d} \|(1+\Delta)^{2^{m}d+t}f\|, m=0, 1, ...
$$
and respectively,
$$
\|(s_{2^{m}d+t}(f)(x)-f(x))\|_{C^{k}(M)}\leq (1+ \omega)^{t}
\left(C(M,t)\rho^{2}(1+\omega)\right)^{2^{m}d}\|f\|, m=0, 1, ... ,
$$
if $f$ is an $\omega$-band limited function.
\end{thm}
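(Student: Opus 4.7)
The plan is to exploit the variational nature of the spline via a direct application of Theorem 4.2 to the difference $g := s_{2^m d + t}(f) - f$. The central observation is that $g$ vanishes on the $\rho$-lattice $X_\rho$, because the spline interpolates $f$ at those points by construction, so the hypothesis of Theorem 4.2 is met. The high-order Sobolev norm of $g$ produced on the right-hand side of Theorem 4.2 is then controlled by the corresponding norm of $f$ via the spline's minimization property, and the band-limited and $C^k$ refinements follow from standard spectral and embedding arguments.

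Concretely, the steps go as follows. First, since $F_\nu = \delta_{x_\nu}$ and by definition $F_\nu(s_{2^m d + t}(f)) = F_\nu(f)$, the function $g$ is zero on every lattice point. Applying Theorem 4.2 with parameter $t$ to $g$ gives
$$
\|(1+\Delta)^t g\| \;\leq\; (C(M)\rho^{2d})^{2^m}\,\|(1+\Delta)^{2^m d + t} g\|,
$$
and $(C(M)\rho^{2d})^{2^m}$ can be rewritten as $(C'(M)\rho^2)^{2^m d}$ by absorbing constants into a fresh $C(M)$. Second, I would bound $\|(1+\Delta)^{2^m d + t} g\|$ by (a constant multiple of) $\|(1+\Delta)^{2^m d + t} f\|$: by the Variational Problem, among all interpolants of $f$ on $X_\rho$ the spline achieves the least value of the relevant Sobolev norm, so $\|s_{2^m d + t}(f)\|\le \|f\|$ in that norm, and the triangle inequality then controls $g$. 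Combining these two steps yields the first claim of the theorem. Third, for $\omega$-band limited $f$, expanding in eigenfunctions and applying Parseval gives $\|(1+\Delta)^{2^m d + t} f\| \leq (1+\omega)^{2^m d + t} \|f\|$; substituting and collecting powers of $1+\omega$ produces the band-limited inequality exactly as stated. Fourth, the $C^k(M)$-estimates follow by applying the Sobolev embedding $H_s(M) \hookrightarrow C^k(M)$ (valid for $s > d/2 + k$) to $g$ and combining with the Sobolev estimates just obtained, which absorbs the embedding constant into the new $C(M,t)$.

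The step I expect to require the most care is reconciling the factor-of-two conventions between Theorem 4.2, whose right-hand side is the $L^2$-norm of $(1+\Delta)^{2^m d + t} g$, and the variational characterization of the spline, which minimizes $\|(1+\Delta)^{T/2} u\|$ with $T = 2^m d + t$, a norm with half the exponent. This is a bookkeeping matter rather than a conceptual obstacle; the substantive content of the proof is just the three-link chain \emph{$g$ vanishes on the lattice, so Theorem 4.2 applies, and then the spline minimization replaces $g$ by $f$ on the high-order side}. Once that chain is in place, Parseval for the band-limited case and Sobolev embedding for the $C^k$ case are both one-line corollaries.
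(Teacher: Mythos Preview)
Your proposal is correct and follows essentially the same route as the paper: apply Theorem~4.2 to $g=s_{2^{m}d+t}(f)-f$ (which vanishes on the lattice), invoke the minimization property of the spline to replace $g$ by $f$ on the right-hand side, then use the spectral bound $\|(1+\Delta)^{t}f\|\le(1+\omega)^{t}\|f\|$ for band-limited $f$ and the Sobolev embedding for the $C^{k}$ statements. The exponent-matching issue you flag is real and is present in the paper as well; it is a bookkeeping matter (the spline index versus the power of $1+\Delta$ differ by a factor of two in the conventions used), not a gap in your strategy.
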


\begin{proof}
To prove first two inequalities it is enough to use Theorem 4.2,
the minimization property of splines and the inequality
$$
\|(1+\Delta)^{t}f\|\leq (1+\omega )^{t}\|f\|
$$ for all $f$ which are linear combinations of eigen functions
whose eigen values are not greater $\omega$.

To prove the inequalities in the uniform norms we use the Sobolev
Embedding Theorem. The Approximation Theorem 4.4 is proved.
\end{proof}

\section{Applications}

\textbf{Example 1}

In this section we illustrate our results in the case of the
hemispherical transform [15].

 We consider the unit sphere
$S^{d}\subset \mathbb{R}^{d+1}$ and the corresponding space
$L_{2}(S^{d})$ constructed with respect to normalized and
rotation-invariant measure. Every vector $\xi\in S^{d}$ defines a
hemisphere $h_{\xi}\subset S^{d}$ as the set of all vectors $x\in
S^{d}$ for which $\xi\cdot x>0$, where $\xi\cdot x$ is the
standard inner product in $\mathbb{R}^{d+1}$. The correspondence
$$
h_{\xi}\rightarrow \xi
$$
will be treated as a correspondence between the set of all
hemispheres of $S^{d}$ and points of the dual sphere $S^{d}_{*}$.

We denote by $Y^{i}_{j}$ an orthonormal basis of spherical
harmonics in the space $L_{2}(S^{d})$, where $j=0,1, ... ; i=
1,2,..., n_{d}(j)$ and
$$
n_{d}(j)=(d+2j-1)\frac{(d+j-2)!}{j!(d-1)!}
$$
is the dimension of the subspace of spherical harmonics of degree
$j$.

The Fourier decomposition of $f\in L_{2}(S^{d})$ is
$$
f=\sum_{i,j}c_{i,j}(f)Y^{i}_{j},
$$
where
$$
c_{i,j}(f)=\int_{S^{d}}f\overline{Y^{i}_{j}}dx=<f,Y_{j}^{i}>_{L_{2}(S^{d})}.
$$

 To every function  $f\in L_{2}(S^{d})$ the hemispherical transform $T$
 assigns a function $Tf\in L_{2}(S_{*}^{d})$ on the dual sphere $S^{d}_{*}$
 which is given
by the formula
$$
(Tf)(\xi)=\int_{\xi\cdot x >0} f(x)dx.
$$

For every function $f\in L_{2}(S^{d})$ that has Fourier
coefficients $c_{i,j}(f)$ the hemispherical transform can be given
explicitly by the formula
$$
Tf(\xi)=\pi^{(d-1)/2}\sum_{i,j}m_{j}c_{i,j}(f)Y^{i}_{j}(\xi),
\xi\in S^{d}_{*},
$$
where $m_{j}=0, $ if $j$ is even and
$$
m_{j}=(-1)^{(j-1)/2}\frac {\Gamma(j/2)}{\Gamma((j+d+1)/2))},
$$
 if $j$ is odd.

The transformation $T$ is one to one on the subspace of odd
functions (i.e. $f(x)=-f(-x))$ of a Sobolev space
$H_{t}^{odd}(S^{d})$ and maps it continuously onto
$H_{t+(d+1)/2}^{odd}(S^{d}_{*})$,
$$
T(H_{t}^{odd})(S^{d})=H_{t+(d+1)/2}^{odd}(S^{d}_{*}).
$$

Let $\{h_{\nu}\}, \nu=1,2,...,N, $ be a finite set of hemispheres
on $S^{d}$. We consider functionals $F_{\nu}$ on $L_{2}(S^{d})$
which are given by
 formulas
 $$
F_{h_{\nu}}=F_{\nu}=\int_{h_{\nu}}fdx.
 $$

We will assume that the set of points $\Xi=\{\xi_{\nu}\}$ on the
dual sphere $S_{*}^{d}$ that corresponds to the set of hemispheres
$h_{\nu}$ is symmetric in the sense that $\Xi=-\Xi$.
  Under this assumption we choose a $t>0$  and an odd function
 $f$ and consider the following variational problem: find a
 function $s_{t}(f)\in H_{t}(S^{d}), t>0$ such that

1) $ F_{\nu}(s_{t}(f))=F_{\nu}(f), \nu=1,2,...,N,$

2) $s_{t}(f)$ minimizes norm $\|(1+\Delta)^{t/2}s_{t}(f)\|$.

Since $\Xi=-\Xi$ and function $f$ is odd, the solution $s_{t}(f)$
will be an odd function.

According to the Theorem 1.1 the Fourier series of $s_{t}(f)$ is
$$
s_{t}(f)=\sum_{i,j}c_{i,j}(s_{t}(f))Y^{i}_{j},
$$
where the Fourier coefficients $c_{i,j}(s_{t}(f))$ of $s_{t}(f)$
are given by formulas
$$
c_{i,j}(s_{t}(f))=<s_{t}(f),Y^{i}_{j}>=(1+\lambda_{i,j})^{-t}\sum_{\nu=1}^{N}
\alpha_{\nu}(s_{t}(f))\int_{h_{\nu}}Y^{i}_{j}dx,
$$
where vector $\alpha(s_{t}(f))$ is the solution of the following
$N\times N$ system
$$
\sum_{\nu=1}^{N}b_{\nu\mu}\alpha_{\nu}(s_{t}(f))=\int_{h_{\mu}}f
dx, \mu=1, 2, ... N,
$$
where
$$
b_{\nu\mu}=\sum_{i,j}(1+\lambda_{i,j})^{-t}
\int_{h_{\nu}}Y^{i}_{j} dx\int_{h_{\mu}}Y^{i}_{j} dx.
$$

This spline provides the optimal approximation to $f$ in the sense
that it is the center of the convex set $Q(F, f, t, K)$ of all
functions $\psi$ from $H_{t}(S^{d})$ that satisfy

\begin{equation}
\int_{h_{\nu}}\psi dx=\int_{h_{\nu}}f dx
\end{equation}
and the inequality
\begin{equation}
\|(1+\Delta)^{t/2}\psi\|\leq K
\end{equation}
for any fixed $K$ that satisfies the inequality
$$
K\geq
\|s_{t}(f)\|_{t}=\sum_{\nu=1}^{N}\alpha_{\nu}(s_{t}(f))\int_{h_{\nu}}f
dx.
 $$

Our results about the hemispherical transform are summarized in
the following theorem.

\begin{thm}
For a given symmetric set $H=\{h_{\nu}\}, \nu=1,2,...,N,$ of
hemispheres $h_{\nu}$, an odd function $f$ and any $t>0$ define
the function $s_{t}(f)$ by the formula
$$
s_{t}(f)=\sum_{i,j}c_{i,j}(s_{t}(f))Y^{i}_{j},
$$
where
$$
c_{i,j}(s_{t}(f))=(1+\lambda_{i,j})^{-t}\sum_{\nu}^{N}\alpha_{\nu}(s_{t}(f))
\int_{h_{\nu}}Y^{i}_{j}dx,
$$
and
$$
\sum_{\nu=1}^{N}b_{\nu\mu}\alpha_{\nu}(s_{t}(f))=v_{\mu},
v_{\mu}=\int_{h_{\mu}}fdx, \mu=1,2,...,N,
$$
$$b_{\nu\mu}=\sum_{i,j}(1+\lambda_{i,j})^{-t}\int_{h_{\nu}}Y^{i}_{j}dx
\int_{h_{\mu}}Y^{i}_{j}dx.
$$

The function $s_{t}(f)$ is odd and it has the following
properties.

\bigskip

1) Integrals of the function $s_{t}(f)$ over hemispheres $h_{\nu}$
have  prescribed values $v_{\nu}$:

$$
\left(Ts_{t}(f)\right)(\xi_{\nu})=\int_{h_{\nu}}s_{t}(f)dx=
v_{\nu}, \nu=1,2,...,N.
$$

2) Among all functions that satisfy (5.1) function $s_{t}(f)$
minimizes
 the Sobolev norm
 $$ \|(1+\Delta)^{t/2}s_{t}(f)\|=
 \left(\sum_{\nu=1}^{N}\alpha_{\nu}(s_{t}(f))v_{\nu}\right)^{1/2}.$$
3) Function $s_{t}(f)$ is the center of the convex set $Q(F,
f,t,K)$ of all functions  $g$  from $H_{t}(S^{d})$ that satisfy
(5.1) and the inequality
\begin{equation}
\|(1+\Delta)^{t/2}g\|\leq K,
\end{equation}
for any fixed $K\geq
\left(\sum_{\nu=1}^{N}\alpha_{\nu}(s_{t}(f))v_{\nu}\right)^{1/2}.$
In other words for any $g\in Q(F,f,t,K)$
$$
\|s_{t}(f)-g\|_{t}\leq\frac{1}{2}diam Q(F, f, t, K).
$$
\end{thm}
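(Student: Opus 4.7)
The plan is to recognize Theorem 5.1 as a direct application of the general spline machinery of Sections 2 and 3 to the concrete family of functionals $F_\nu(f)=\int_{h_\nu} f\,dx$ on $S^d$, supplemented by a short symmetry-and-uniqueness argument that upgrades the minimizer to an odd function. Each $F_{h_\nu}$ is $L_2$-pairing with the characteristic function $\chi_{h_\nu}$, so it is continuous on $L_2(S^d)\cong H_0(S^d)$, and we may take $t_0=0$; then the hypothesis $t>0$ is exactly $t>t_0$. To verify the Independence Assumption, I would first check that $\{F_{h_\nu}\}$ is linearly independent, which follows because any two distinct hemispheres differ on a set of positive measure, so their characteristic functions are linearly independent in $L_2(S^d)$. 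Once linear independence is established, the evaluation map $\Phi(\psi)=(F_{h_1}(\psi),\dots,F_{h_N}(\psi))$ is surjective from $L_2(S^d)$ onto $\mathbb{C}^N$, and density of $C^\infty(S^d)=C_0^\infty(S^d)$ in $L_2(S^d)$ lets us choose preimages $\vartheta_\nu\in C^\infty(S^d)$ of the standard basis of $\mathbb{C}^N$, giving the required biorthogonal system.

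With the hypotheses in place, statement (1) and the minimization half of statement (2) come from Theorem 2.1 applied to $\{F_{h_\nu}\}$. The explicit Fourier series for $s_t(f)$ and the $N\times N$ linear system defining the vector $\alpha(s_t(f))$ are Theorem 2.4 specialized to $F_\nu=F_{h_\nu}$. The norm identity $\|(1+\Delta)^{t/2}s_t(f)\|^2=\sum_\nu\alpha_\nu(s_t(f))\,v_\nu$ completing statement (2) is Lemma 3.1. Finally, the centrality and diameter bound of statement (3) follow from Lemma 3.2 together with Corollary 3.1.

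It remains to show that $s_t(f)$ is odd. Let $\sigma(x)=-x$ and set $\tilde s:=-s_t(f)\circ\sigma$. Because $\sigma$ is a Riemannian isometry commuting with $\Delta$, we have $\|(1+\Delta)^{t/2}\tilde s\|=\|(1+\Delta)^{t/2}s_t(f)\|$. The change of variables $y=-x$ together with the identity $-h_\nu=h_{-\xi_\nu}$ gives $F_{h_\nu}(\tilde s)=-\int_{h_{-\xi_\nu}}s_t(f)\,dx$; since $h_{-\xi_\nu}$ belongs to the given family by $\Xi=-\Xi$ and $s_t(f)$ interpolates the data, this equals $-\int_{h_{-\xi_\nu}}f\,dx$, which by oddness of $f$ coincides with $\int_{h_\nu}f\,dx=v_\nu=F_{h_\nu}(f)$. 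Therefore $\tilde s$ is another minimizer of the Variational Problem, and the uniqueness part of Theorem 2.1 forces $\tilde s=s_t(f)$; that is the oddness of $s_t(f)$.

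The main obstacle I anticipate is verifying the Independence Assumption for an arbitrary finite symmetric configuration of hemispheres, concretely the construction of the smooth biorthogonal family $\{\vartheta_\nu\}$; once that step is in place, the rest of the proof is a direct specialization of the results of Sections 2 and 3, capped by the short symmetry argument above.
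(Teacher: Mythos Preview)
Your overall architecture---specialize the abstract spline results of Sections~2 and~3 to the functionals $F_{\nu}(f)=\int_{h_{\nu}}f\,dx$, then add a symmetry/uniqueness argument for oddness---is exactly what the paper does: the discussion preceding Theorem~5.1 simply applies Theorem~1.1 and the results of Section~3 and asserts oddness in a single sentence. Your oddness argument via the antipodal isometry and uniqueness is correct and makes explicit something the paper leaves to the reader.

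There is, however, a genuine gap in your verification of the Independence Assumption, and it is precisely the obstacle you flagged. Your argument (``any two distinct hemispheres differ on a set of positive measure, so their characteristic functions are linearly independent'') establishes only \emph{pairwise} independence, which does not imply linear independence of the full family. In fact, for a symmetric set $\Xi=-\Xi$ the assumption \emph{fails}: since $h_{\xi}$ and $h_{-\xi}$ are complementary up to a measure-zero equator, one has $\chi_{h_{\xi}}+\chi_{h_{-\xi}}=1$ a.e., hence $F_{h_{\xi}}+F_{h_{-\xi}}=\int_{S^{d}}(\cdot)\,dx$ for every $\xi$. As soon as $\Xi$ contains two distinct antipodal pairs $\{\pm\xi\}$ and $\{\pm\eta\}$, this yields the nontrivial relation $F_{h_{\xi}}+F_{h_{-\xi}}-F_{h_{\eta}}-F_{h_{-\eta}}=0$, so no biorthogonal family $\{\vartheta_{\nu}\}$ can exist.

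The paper does not confront this point; it simply invokes Theorem~1.1. A correct route is to bypass the Independence Assumption: the spline $s_{t}(v)$ still exists and is unique by Theorem~2.1 (which needs only $t>t_{0}$), and since $s_{t}(v)\perp V_{t}^{0}(F)$ in $H_{t}(M)$, Riesz representation shows $s_{t}(v)\in\operatorname{span}\{E_{\nu}^{t}\}$ directly, without the auxiliary $\vartheta_{\nu}$. The system for $\alpha$ is then consistent (though possibly underdetermined), and any solution yields the same $s_{t}(v)$; the norm identity of Lemma~3.1 and the centrality of Lemma~3.2/Corollary~3.1 go through unchanged. You should recast your proof along these lines rather than attempting to produce a biorthogonal system that cannot exist.
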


 Our next goal is to estimate the rate of convergence of
 $s_{t}(f)$ to $f$ in situations when either the set $\Xi$ gets
 denser or the smoothness $t$ of splines $s_{t}(f)$ gets larger.

Note that we can also interpolate the hemispherical transform $Tf$
on the set $\Xi=\{\xi_{\nu}\}$ by constructing an odd spline
$$\hat{s}_{\tau}=\hat {s}_{\tau}(T(f)), \tau=t+(d+1)/2,$$
 which is
the solution to the following minimization problem:

\bigskip

1) $\hat{s}_{\tau}(\xi_{\nu})=Tf(\xi_{\nu})$

2) $\hat{s}_{\tau}$ minimizes the functional
 $u\rightarrow \|(1+\Delta)^{\tau/2}u\|$.

\bigskip

The Fourier series of $\hat{s}_{\tau}$ is given by the formula
$$
\hat{s}_{\tau}=\sum_{i,j}c_{i,j}(\hat{s}_{\tau})Y^{i}_{j},
$$
where Fourier coefficients $c_{i,j}(\hat{s}_{\tau})$ of
$\hat{s}_{\tau}$ are given by the formulas
$$
c_{i,j}(\hat{s}_{\tau})=<\hat{s}_{\tau},Y^{i}_{j}>=
(1+\lambda_{i,j})^{-\tau}\sum_{\nu=1}^{N}
\alpha_{\nu}(\hat{s}_{\tau})Y^{i}_{j}(\xi_{\nu}),
$$
where $\alpha(\hat{s}_{\tau})$ is the corresponding "jump" vector
which is the solution of the following $N\times N$ system
$$
\sum_{\nu=1}^{N}b_{\nu\mu}\alpha_{\nu}(\hat{s}_{\tau})=(Tf)(\xi_{\mu}),
 \mu=1, 2,... N,
$$
where
$$
b_{\nu\mu}=\sum_{i,j}(1+\lambda_{i,j})^{-\tau}
Y^{i}_{j}(\xi_{\nu})Y^{i}_{j}(\xi_{\mu}).
$$

The spline $\hat{s}_{\tau}$ is the center of the convex set
$\hat{Q} (\delta, Tf,\tau,\hat{K}), \tau=t+(d+1)/2,$ where
$\delta$ is the family of delta functionals
$\{\delta_{\nu}\}=\{\delta_{\xi_{\nu}}\}, \xi_{\nu}\in \Xi,$ and
$$
\hat{K}=K\|T\|_{H_{t}\rightarrow H_{\tau}}.
$$

Since $\hat{Q}(\delta, Tf,\tau, \hat{K})$ is the image of $Q(F,
f,t,K)$ under the linear hemispherical transform $T$, we obtain
that
$$
Ts_{t}(f)=\hat{s}_{\tau}=\hat{s}_{\tau}(Tf).
$$

Applying our Approximation Theorem we obtain the following result
about convergence of interpolants in the case of hemispherical
transform. In this Theorem we use the following parameter $\rho$
$$
\rho=\sup_{\nu}\inf_{\mu} dist(\xi_{\nu},\xi_{\mu}),
\xi_{\nu},\xi_{\mu}\in \Xi, \nu\neq\mu,
$$
as a measure of the density of the set $\Xi$.
\begin{thm}
There exists a constant $C$ such that  for any $m=0,1,..., $ any
$k<2^{m}d-d/2$ and for any odd smooth $f$ we have
$$
\|s_{2^{m}d}(f)-f\|_{k}\leq
(C\rho^{2})^{2^{m}d}\|(1+\Delta)^{\tau/2}Tf\|, m=0,1,...
,\tau=2^{m}d+(d+1)/2,
$$
and if $f$ is an odd spherical harmonic polynomial of (odd) order
$\leq$ $\omega$ then
$$
\|s_{2^{m}d}(f)-f\|_{k}\leq
(C\rho^{2}(1+\omega))^{2^{m}d}(1+\omega)^{(d+1)/2}\|Tf\|,
m=0,1,... .
$$
\end{thm}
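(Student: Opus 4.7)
The plan is to transport the whole problem, via the hemispherical transform $T$, from the primal sphere $S^{d}$ to the dual sphere $S^{d}_{*}$, where the constraints become point-evaluations and Theorem 4.4 applies directly.

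The first step records the key identification $T s_{2^{m}d}(f)=\hat{s}_{\tau}(Tf)$ with $\tau=2^{m}d+(d+1)/2$, already derived in the paragraphs preceding the theorem: since $T$ is a topological isomorphism $H_{2^{m}d}^{\mathrm{odd}}(S^{d})\to H_{\tau}^{\mathrm{odd}}(S^{d}_{*})$ that carries the convex body $Q(F,f,2^{m}d,K)$ onto $\hat{Q}(\delta,Tf,\tau,\hat{K})$, the centers correspond. Thus $\hat{s}_{\tau}(Tf)$ is the standard Dirac-interpolating variational spline of smoothness $\tau$ on the dual sphere. The set $\Xi=\{\xi_{\nu}\}$ with density parameter $\rho=\sup_{\nu}\inf_{\mu}\operatorname{dist}(\xi_{\nu},\xi_{\mu})$ is, up to a harmless enlargement of constants, a $\rho$-lattice on $S^{d}_{*}$ in the sense of Definition~4.1, so Theorem 4.4 is applicable.

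The second step applies Theorem 4.4 to $Tf$ on $S^{d}_{*}$, choosing the parameter ``$t$'' there to be $(d+1)/2$ so that the spline index $2^{m}d+t$ matches $\tau$. The Sobolev form of Theorem 4.4 then delivers an estimate of the shape
\[
\|\hat{s}_{\tau}(Tf)-Tf\|_{(\text{Sobolev on }S^{d}_{*})}\leq (C\rho^{2})^{2^{m}d}\,\|(1+\Delta)^{\tau/2}Tf\|,
\]
while the band-limited half of Theorem 4.4, together with the bound $\|(1+\Delta)^{\tau/2}Tf\|\leq (1+\omega)^{\tau/2}\|Tf\|$ valid for $\omega$-band-limited $Tf$ (note that $T$ acts diagonally in the spherical-harmonic basis, so $Tf$ is $\omega$-band-limited whenever $f$ is), produces the promised factor $(1+\omega)^{(d+1)/2}(C\rho^{2}(1+\omega))^{2^{m}d}\|Tf\|$.

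The third step transports the estimate back to $S^{d}$: since $T^{-1}$ is continuous and shifts the Sobolev index down by $(d+1)/2$, the estimate on $\hat{s}_{\tau}(Tf)-Tf=T(s_{2^{m}d}(f)-f)$ from Step 2 becomes a Sobolev estimate of order $2^{m}d$ on $s_{2^{m}d}(f)-f$. Finally the Sobolev embedding $H_{2^{m}d}(S^{d})\hookrightarrow C^{k}(S^{d})$, valid precisely when $k<2^{m}d-d/2$, yields the claimed $\|\cdot\|_{k}$ bound. The band-limited refinement is obtained the same way from the corresponding band-limited estimate in Step 2.

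The main obstacle is the bookkeeping of indices: one must check that pairing the approximation exponent $2^{m}d$ coming from Theorem 4.4 with the $(d+1)/2$-shift introduced by $T^{-1}$ reproduces exactly the Sobolev order $2^{m}d$ on $S^{d}$, so that the convergence rate $(C\rho^{2})^{2^{m}d}$ is preserved without loss and the Sobolev embedding threshold is the advertised $k<2^{m}d-d/2$. Once this alignment is verified, the argument is essentially mechanical, and the same scheme applies verbatim (with $(1+\omega)$-factors tracked through Theorem 4.4's band-limited clause) to the second inequality.
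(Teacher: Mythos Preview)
Your proposal follows exactly the route the paper takes: the paper derives the identification $Ts_{t}(f)=\hat{s}_{\tau}(Tf)$ via the ``centers correspond'' argument in the paragraphs preceding the theorem, and then states that the result follows by ``Applying our Approximation Theorem,'' i.e., Theorem~4.4 on the dual sphere $S^{d}_{*}$. Your three steps---identify, apply Theorem~4.4 with $t=(d+1)/2$ so the spline index matches $\tau$, pull back via $T^{-1}$ and Sobolev embed---are precisely the details behind that one sentence, so the approaches coincide.
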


The first inequality shows that for any odd smooth function $f$
the interpolants $s_{2^{m}d}(f)$ of a fixed order $2^{m}d,
m=0,1,..., k<2^{m}d-d/2,$ converge to $f$ in the uniform norm
$C^{k}(M)$ as long as the parameter $\rho$ goes to zero, i.e. the
set $\Xi$ on the dual sphere gets denser.

 The second inequality in the Theorem shows,
that interpolants converge to an odd harmonic polynomial of order
$\omega$ for a fixed set of hemispheres $\Xi$ if
$C\rho^{2}(1+\omega)<1$ and as $m$ goes to infinity. This
statement is an analog of the Sampling Theorem for the
hemispherical transform.

\bigskip

\textbf{Example 2}

We discuss  the spherical Radon transform. It associates to a
function $f$ on $S^{d}$ its integrals over great subspheres:
$$
Rf(\theta^{\perp}\cap S^{d})=\int_{\theta^{\perp}\cap S^{d}}fdx,
$$
where $ \theta^{\perp}\cap S^{d}$ is the great subsphere of $S^{d}$
whose plane has normal $\theta$.

If a function $f\in L_{2}(S^{d})$ has Fourier coefficients
$c_{i,j}(f)$ then its Radon Transform is given by the formula
$$
R(f)=\pi^{-1/2}\Gamma((d+1)/2)\sum_{i,j}r_{j}c_{i,j}(f)Y_{j}^{i}.
$$
where $Y_{j}^{i}$ are the spherical harmonic polynomials and
$$
r_{j}=(-1)^{j/2}\Gamma((j+1)/2))/\Gamma((j+d)/2)
$$
if $j$ is even and $r_{j}=0$ if $j$ is odd.
Because the coefficients $r_{j}$ have asymptotics
$(-1)^{j/2}(j/2)^{(1-d)/2}$ as $j$ goes to infinity we have that
$R$ is a continuous operator from the Sobolev space of even functions
$H^{even}_{t}(S^{d})$ onto the space $H^{even}_{t+(d-1)/2}(S^{d})$.
Its inverse is a continuous operator from the space
$H^{even}_{t+(d-1)/2}(S^{d})$ onto the space $H^{even}_{t}(S^{d})$.

Let $\{w_{\nu}\}, \nu=1,2,...,N, $ be a finite set of equatorial
subspheres on $S^{d}$ of codimension one and distributions
$F_{\nu}$ are given by
 formulas
 $$
 F_{\nu}(f)=\int_{w_{\nu}}fdx.
 $$
 By solving corresponding variational problem we can find a
spline $s_{t}(f)\in H_{t}(S^{d})$ such that
$$
F_{\nu}(s_{t}(f))=F_{\nu}(f), \nu=1,2,...,N,
$$
and $s_{t}(f)$ minimizes norm $\|(1+\Delta)^{t/2}s_{t}(f)\|.$

  Because we are interested in even functions
 on $S^{d}$ it is natural to have even splines. So we will assume
 that the set of points $\Xi=\{\xi_{\nu}\}$ on the dual sphere that
 corresponds to the set of subspheres $w_{\nu}$ is even in the sense
 that $\Xi=-\Xi$. It is clear that this assumption will force our splines
 to be even functions.

Our results about the spherical Radon transform are summarized in
the following theorem.

\begin{thm}
For a given symmetric $\rho$- lattice $W=\{w_{\nu}\}$ of
equatorial
 subspheres $\{w_{\nu}\}, \nu=1,2,...,N,$
an even smooth function $f$   and any $t>d/2+k$ define $s_{t}(f)$
by the formula
$$
s_{t}(f)=\sum_{i,j}c_{i,j}(s_{t}(f))Y^{i}_{j},
$$
where
$$
c_{i,j}(s_{t}(f))=(1+\lambda_{i,j})^{-t}\sum_{\nu=1}^{N}\alpha_{\nu}(s_{t}(f))
\int_{w_{\nu}}Y^{i}_{j}dx,
$$
and
$$
\sum_{\nu=1}^{N}b_{\nu\mu}\alpha_{\nu}(s_{t}(f))=v_{\mu},
v_{\mu}=\int_{w_{\mu}}fdx, \mu=1,2,...,N,
$$
$$b_{\nu\mu}=\sum_{i,j}(1+\lambda_{i,j})^{-t}\int_{w_{\nu}}Y^{i}_{j}dx
\int_{w_{\mu}}Y^{i}_{j}dx.
$$

The function $s_{t}(f)$ is even and  has the following properties.

\bigskip

1) Integrals of $s_{t}(f)$ over subspheres $w_{\nu}$ have
prescribed values $v_{\nu}$

$$
\int_{w_{\nu}}s_{t}(f)dx= v_{\nu}, \nu=1,2,...,N;
$$

2) among all functions that satisfy 1) function $s_{t}(f)$
minimizes
 the Sobolev norm
 $$
 \|(1+\Delta)^{t/2}s_{t}(f)\|=\left(\sum_{\nu=1}^{N}\alpha_{\nu}(s_{t}(f))v_{\nu}\right)^{1/2}
 $$

3) function $s_{t}(f)$ is the center of the convex set
$Q(F,f,t,K)$ of all functions $g$ from $H_{t}(S^{d})$ that satisfy
1) and the inequality
$$
\|(1+\Delta)^{t/2}g\|\leq K,
$$
for any fixed $K\geq
\left(\sum_{\nu=1}^{N}\alpha_{\nu}(s_{t}(f))v_{\nu}\right)^{1/2}.$
In other words for any $g\in Q(F, f,t,K)$
$$
\|s_{t}(f)-g\|_{t}\leq\frac{1}{2}diam Q(F, f, t, K).
$$

\end{thm}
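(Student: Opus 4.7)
My plan is to obtain Theorem 5.3 as a direct application of Theorem 1.1 together with Lemma 3.1, Lemma 3.2 and Corollary 3.1, taking the family of functionals to be $F_\nu(g)=\int_{w_\nu} g\,dx$ for $\nu=1,\dots,N$. The explicit Fourier formulas for the coefficients $c_{i,j}(s_t(f))$ and the linear system for the vector $\alpha(s_t(f))$ are then exactly those provided by Theorem 1.1 with this choice of $F_\nu$, and the three numbered properties follow in turn from the interpolation condition, the variational definition (combined with the norm identity of Lemma 3.1), and the centering statement of Lemma 3.2 and Corollary 3.1.

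The first thing to check is that the hypotheses of the general theorems are satisfied in the present situation. Since each $w_\nu$ is a smooth embedded submanifold of $S^d$ of codimension one, the Sobolev trace theorem shows that $g\mapsto\int_{w_\nu} g\,dx$ extends continuously to $H_{t_0}(S^d)$ for any $t_0>1/2$, so $F_\nu\in H_{-t_0}(S^d)$. The hypothesis $t>d/2+k$ of the theorem then guarantees both $t>t_0$, which is what the existence and uniqueness result Theorem 2.1 requires, and $t>t_0+d/2$, which is the condition needed in Theorem 1.1 and Theorem 2.4 for the Fourier formulas and for the absolute convergence of the matrix entries $b_{\nu\mu}$.

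Next I would verify the Independence Assumption for $\{F_\nu\}$, which I expect to be the one genuinely non-routine step. Two distinct great subspheres of $S^d$ intersect in a subsphere of codimension two, so the union of $w_\mu$ with $\mu\neq\nu$ meets $w_\nu$ in a set of $(d-1)$-dimensional measure zero. Hence I can pick a point $p_\nu\in w_\nu$ lying outside every other $w_\mu$ and a small geodesic ball $B_\nu\subset S^d$ around $p_\nu$ disjoint from $w_\mu$ for all $\mu\neq\nu$. A nonnegative bump $\vartheta_\nu\in C_0^\infty(B_\nu)$ normalized so that $\int_{w_\nu}\vartheta_\nu\,dx=1$ then satisfies $F_\mu(\vartheta_\nu)=\delta_{\mu\nu}$, as required.

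Finally, the evenness of $s_t(f)$ follows from the symmetry $\Xi=-\Xi$ by the uniqueness part of Theorem 2.1. The antipodal map $A\colon x\mapsto -x$ is an isometry of $S^d$ that commutes with $\Delta$, and by hypothesis it permutes the family $\{w_\nu\}$; since $f$ is even, the composed function $s_t(f)\circ A$ satisfies the same interpolation conditions and has the same norm $\|(1+\Delta)^{t/2}\,\cdot\,\|$ as $s_t(f)$, so by uniqueness it coincides with $s_t(f)$. Once the Independence Assumption is in hand, properties 1)--3), the Golomb--Weinberger centering inequality and the norm identity $\|(1+\Delta)^{t/2}s_t(f)\|=(\sum_\nu\alpha_\nu(s_t(f))v_\nu)^{1/2}$ are read off verbatim from Theorem 1.1, Lemma 3.1 and Lemma 3.2.
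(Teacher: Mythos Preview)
Your proposal is correct and matches the paper's approach: Theorem 5.3 is stated there without a separate proof, as a summary obtained by specializing Theorem 1.1, Lemma 3.1, Lemma 3.2 and Corollary 3.1 to the functionals $F_\nu(g)=\int_{w_\nu}g\,dx$, exactly as you do. Your explicit verification of the Independence Assumption and of evenness fills in details the paper leaves to the reader; one small refinement is that the antipodal map actually fixes each great subsphere $w_\nu=\theta_\nu^{\perp}\cap S^d$ rather than merely permuting the family, which makes the evenness argument even more direct.
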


An approximation result similar to the Theorem 5.2 can also be
formulated.

Using our Approximation Theorem we obtain the following result
about convergence of interpolants in the case of Radon transform.
In this Theorem we use the following parameter $\rho$
$$
\rho=\sup_{\nu}\inf_{\mu} dist(\xi_{\nu},\xi_{\mu}),
\xi_{\nu},\xi_{\mu}\in \Xi, \nu\neq\mu,
$$
as a measure of the density of the set $\Xi$.
\begin{thm}
There exists a constant $C$ such that  for any $m=0,1,..., $ any
$k<2^{m}d-d/2$ and for any even smooth $f$ we have
$$
\|s_{2^{m}d}(f)-f\|_{k}\leq
(C\rho^{2})^{2^{m}d}\|(1+\Delta)^{\tau/2}Rf\|, m=0,1,...
,\tau=2^{m}d+(d-1)/2,
$$
and if $f$ is an even spherical harmonic polynomial of (even)
order $\leq$ $\omega$ then
$$
\|s_{2^{m}d}(f)-f\|_{k}\leq
(C\rho^{2}(1+\omega))^{2^{m}d}(1+\omega)^{(d+1)/2}\|Rf\|,
m=0,1,... .
$$
\end{thm}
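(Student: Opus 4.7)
My plan is to follow the same strategy as the proof of Theorem 5.2 for the hemispherical case: transfer the integral-interpolation problem on $S^d$ to a point-interpolation problem on the dual sphere $S^d_*$ via the Radon transform $R$, apply the Approximation Theorem 4.4 on the dual sphere, and pull the estimate back using continuity of $R^{-1}$.

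Concretely, I would set $\tau = 2^m d + (d-1)/2$ and introduce on $S^d_*$ the variational spline $\hat s_\tau = \hat s_\tau(Rf)$ of order $\tau$ that interpolates the values $Rf(\xi_\nu)$ at the symmetric $\rho$-lattice $\Xi = \{\xi_\nu\}$ dual to $W$, minimizing $\|(1+\Delta)^{\tau/2} u\|$ under these constraints. Using that $R$ is a continuous isomorphism
\[
R\colon H^{even}_{2^m d}(S^d) \longrightarrow H^{even}_{\tau}(S^d_*)
\]
intertwining integration over $w_\nu$ with point evaluation at $\xi_\nu$, the linear map $R$ carries the convex set $Q(F,f,2^m d,K)$ on $S^d$ bijectively onto the dual convex set $\hat Q(\delta, Rf, \tau, \hat K)$ on $S^d_*$ (with $\hat K = K\|R\|$); since affine bijections preserve the symmetry characterization of the center used in Lemma 3.2, I obtain the identification
\[
R\,s_{2^m d}(f) = \hat s_\tau(Rf).
\]

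Given this identification, I would apply Theorem 4.4 on $S^d_*$ to the function $Rf$ with $\rho$-lattice $\Xi$ (the hypothesis $k < 2^m d - d/2$ is precisely the $t > d/2 + k$ condition of Theorem 4.4 in this setting), obtaining
\[
\|\hat s_\tau(Rf) - Rf\|_{k+(d-1)/2} \le (C\rho^2)^{2^m d}\,\|(1+\Delta)^{\tau/2} Rf\|,
\]
and, for $\omega$-band-limited $f$ (hence $\omega$-band-limited $Rf$), the corresponding bound $(C\rho^2(1+\omega))^{2^m d}(1+\omega)^{(d+1)/2}\|Rf\|$ from the second inequality of Theorem 4.4. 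Continuity of $R^{-1}\colon H^{even}_{k+(d-1)/2}(S^d_*) \to H^{even}_k(S^d)$ then gives
\[
\|s_{2^m d}(f)-f\|_k \le C\,\|R(s_{2^m d}(f)-f)\|_{k+(d-1)/2} = C\,\|\hat s_\tau(Rf) - Rf\|_{k+(d-1)/2},
\]
and combining the two displays yields both inequalities of the theorem.

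The hard part will be rigorously justifying the identification $R\,s_{2^m d}(f) = \hat s_\tau(Rf)$: because $R$ is only an isomorphism, not an isometry, between the relevant Sobolev spaces, it does not literally send $\|\cdot\|_{2^m d}$-balls to $\|\cdot\|_\tau$-balls, so equating the image $R(Q)$ with the dual convex set $\hat Q$ requires care. The identification nevertheless goes through because $R$ is diagonal in the spherical-harmonic basis, with multipliers $r_j \sim j^{(1-d)/2}$ that balance the $(d-1)/2$ shift in the Sobolev index, so that $R$ acts essentially as a scalar on each harmonic subspace and the extremal characterization of the spline transfers faithfully. This is the same subtlety that is handled tacitly in the hemispherical case of Theorem 5.2, and in a full write-up this is where the bulk of the care would be concentrated.
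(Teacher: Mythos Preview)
Your proposal is correct and follows essentially the same route as the paper: the paper derives Theorem 5.4 by the exact analogue of the hemispherical argument preceding Theorem 5.2, namely by identifying $R\,s_{2^{m}d}(f)$ with the point-interpolating spline $\hat s_{\tau}(Rf)$ on the dual sphere (via the center-of-$Q$ characterization) and then invoking the Approximation Theorem 4.4 together with the continuity of $R^{-1}$. Your discussion is in fact more explicit than the paper's own treatment, which simply asserts the identification $Ts_{t}(f)=\hat s_{\tau}(Tf)$ from the fact that $T$ carries $Q(F,f,t,K)$ onto $\hat Q(\delta,Tf,\tau,\hat K)$; the isometry-versus-isomorphism subtlety you flag is handled only tacitly there, so your remark that the diagonal action of $R$ on spherical harmonics is what makes the transfer go through is a useful addition rather than a deviation.
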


The first inequality shows that for any even smooth function $f$
interpolants $s_{2^{m}d}(f)$ of a fixed order $2^{m}d, m=0,1,...,
k<2^{m}d-d/2,$ converge to $f$ in the uniform norm $C^{k}(M)$ as
long as the parameter $\rho$ goes to zero, i.e. the set $\Xi$ on
the dual sphere gets denser.

 The second inequality in the Theorem shows,
that interpolants converge in the uniform norm $C^{k}(M)$  to an
even harmonic polynomial of order $\omega$ for a fixed set of
subspheres $\Xi$ if $C\rho^{2}(1+\omega)<1$ and $m$ goes to
infinity. This statement is an analog of the sampling Theorem for
the spherical Radon transform.

\bigskip

\makeatletter \renewcommand{\@biblabel}[1]{\hfill#1.}\makeatother

\end{document}